 \newtheorem{example}{Example}[section]
 \newtheorem{assumption}{Assumption}
 \theoremstyle{plain}
 \newtheorem{theorem}{Theorem}[section]
 \newtheorem{lemma}[theorem]{Lemma}
 \newtheorem{corollary}[theorem]{Corollary}
 \theoremstyle{definition}
 \newtheorem{definition}[theorem]{Definition}
 \theoremstyle{remark}
 \newtheorem{remark}{Remark}
\begin{document}

\title{ The Smoothing Proximal Gradient Algorithm with Extrapolation for the Relaxation of $l_0$ Regularization Problem
}


\articletype{ARTICLE TEMPLATE}

\title{ The Smoothing Proximal Gradient Algorithm with Extrapolation for the Relaxation of $L_0$ Regularization Problem}

\author{
	\name{Jie Zhang\textsuperscript{a} and Xinmin Yang\textsuperscript{b} and Gaoxi Li\textsuperscript{c} and Ke Zhang\textsuperscript{d}\thanks{CONTACT  G.X.Li. Author. Email: ligaoxicn@126.com}}
	\affil{\textsuperscript{a} College of Mathematics, Sichuan University, Chengdu 610065, China\\
		\textsuperscript{b} School of Mathematics Science, Chongqing Normal University, Chongqing 401331, China\\
		\textsuperscript{c} School of Mathematics and Statistics, Chongqing Technology and Business University, Chongqing, China\\
		\textsuperscript{d}   School of Mathematics and Computer Science, Chongqing College of International Business and Economics, Chongqing, China
	}
}

\maketitle
\begin{abstract}
 In this paper, we consider the exact continuous relaxation model of $l_0$ regularization problem which was given by Bian and Chen (SIAM J. Numer. Anal 58(1): 858-883, 2020) and propose a smoothing proximal gradient algorithm with extrapolation (SPGE) for this kind of problem. We show that any accumulation point of the sequence generated by SPGE algorithm is a lifted stationary point of the relaxation model for a fairly general choice of extrapolation parameter.
 Moreover, it is shown that the whole sequence
 generated by SPGE algorithm converges to a lifted stationary point of the relaxation model.
 The convergence rate of $o(\frac{1}{k^{1-\sigma}})(\sigma\in(0,1))$  with respect to squared proximal residual is established.
   Finally, we conduct three important numerical experiments to illustrate the faster efficiency of the SPGE algorithm compared with the smoothing proximal gradient(SPG) algorithm proposed by Bian and Chen (SIAM J. Numer. Anal 58(1):858-883, 2020).

\end{abstract}
\begin{keywords}
	Smoothing approximation \and  Proximal gradient method \and Extrapolation \and  $l_0$ regularization problem 
\end{keywords}
\section{Introduction}
\label{intro}

In this paper, we concentrate on the exact continuous relaxation model of $l_0$ regularization problem which was proposed in \cite{bianSmoothingProximalGradient2020}, i.e.,
\begin{align}
\mathop{\min}\limits_{x\in \mathcal{X}} \quad \mathcal{F}(x):=f(x)+\lambda \Phi(x),\label{1.2}
\end{align}
  where  $\mathcal{X}\!=\!\left\{x\in R^n\!:\! l\leq x\leq u\right\}$, $l, u\in \left\{0, \pm \infty \right\}^n$ with $l \leq 0 \leq u$ and $l<u$, $\Phi(x)=\sum_{i=1}^{n} \phi(x_i)$, $f:R^n\to [0,\infty)$ is convex, not necessarily smooth and has Lipschitz continuous constant $L_f$ on $\mathcal{X}$. $\lambda$ is a positive parameter keeping a trade-off between data fidelity and sparsity. The problem (\ref{1.2}) is an exact continuous relaxation of the $l_0$ regularization problem and the $l_0$  regularization problem is specifically expressed as
 \begin{align}
\mathop{\min}\limits_{x\in\mathcal{X}} \quad \mathcal{F}_{l_0}(x):=f(x)+\lambda {\Vert x\Vert}_0,\label{1.1}
\end{align}
where $\Vert\cdot\Vert_0$ denotes the so called $l_0$ norm. For a vector $x\in R^n$, $\Vert x\Vert_0= \vert\mathcal{A}(x)\vert$, where $\mathcal{A}(x)=\left\{i\in \left\{1,2,3,\cdots,n \right\}:x_i\ne 0\right\}$ and $\vert\mathcal{A}(x)\vert$ is the cardinality of the vector $x$. We say that $x\in R^n$ is sparse if $\vert \mathcal{A}(x)\vert\ll n$. Given a parameter $v>0$, the capped-$l_1$ function  $\phi$ is
 \begin{align}
\phi(t)=\mathop{\min\left\{1, |t|/v\right\}}\label{1.3}.
\end{align}
Moreover,  $\phi$ in (\ref{1.3}) is a DC function which can be represented in the following form,
\begin{align}
\phi(t) =\frac{\vert t\vert}{v}-\max\left\{\theta_1(t),\theta_2(t),\theta_3(t)\right\}\label{2.1},
\end{align}
with $\theta_1(t)=0, \,\theta_2(t)=t/v-1,\, \theta_3(t)=-t/v-1$
and $\mathcal{D}(t)=\{i=\{1,2,3\}: \theta_i(t)=\max\{\theta_1(t),\theta_2(t), \theta_3(t)\}\} $.

 In {\cite{bianSmoothingProximalGradient2020}, Bian and Chen proposed  SPG algorithm which combines the smoothing method and proximal gradient algorithm to solve the exact continuous relaxation model of the $l_0$ regularization problem. It is well known that the proximal gradient method is always an usual and efficient method to deal with the following problem 
 \begin{align}\label{1.1a}
  \min_{x\in R^n} {\mathcal F}(x):=f(x)+g(x),
  \end{align}
  where $f: R^n\to R$ is a smooth and possibly nonconvex function, $g:R^n\to (-\infty,+\infty]$ is a proper closed, nonsmooth and possibly nonconvex function. This kind of problem includes many optimization models deriving from various applications such as machine learning
  \cite{clarksonSublinearOptimizationMachine2010,jainNonconvexOptimizationMachine2017}, signal and image processing \cite{ghadimiAcceleratedGradientMethods2016}, compressed sensing \cite{donohoCompressedSensing2006a} and so on. The proximal mapping  \cite{parikhProximalAlgorithms2014b} of $tg$ is often assumed to be easily computed for all $t>0, u\in R^n$  and is defined as follows
  \[   prox_{tg}(u): = { \mathop{argmin}\limits}\left\{g(x)+\frac{1}{2t}{\Vert x-u\Vert}^2 \right\}.
  \]

 Since the low cost of the proximal gradient method, a great many of researchers  devote to it and aim to improve its efficiency. One common and efficient technique is to perform extrapolation,
 where momentum terms involving the previous iterations are added to the current iteration. One of the accelerated versions of the proximal gradient method was introduced by Ochs et al. \cite{ochsIPianoInertialProximal2014b}. They proposed to incorporate the inertial force which is also called extrapolation item into   proximal step and studied the inertial proximal gradient algorithm for nonconvex optimization (iPiano). It can be seen as a nonsmooth split version of the Heavy-ball method from Polyak  \cite{polyakMethodsSpeedingConvergence1964a}, the update system of the algorithm is
\begin{align}
 y^k=&x^k+  \beta_k(x^k-x^{k-1}), \notag\\
 x^{k+1}=&{  prox_{{t}_k g}}(y_k-{t}_k\nabla f(x_k)),\notag
\end{align}
where $\beta_k$, $t_k$ are the extrapolation coefficient and stepsize respectively  satisfying some rules and $prox$ is the proximal operator.
 The convergence rate $O(\frac{1}{k})$ with respect to squared proximal residual is obtained  and global convergence is exhibited based on the Kurdyka-{\L}ojasiewicz (K-L) property.
Another acceleration method adds the inertial force  not only  to   proximal step but
also to   gradient step which was raised by Nesterov
\cite{nesterovIntroductoryLecturesConvex2004,bFirstOrderMethodsOptimization2017},
  the general framework is as follows:
\begin{align*}
  y^k=&  x^k+\beta_k(x^k-x^{k-1}),  \notag\\
  x^{k+1}= &{  prox_{{t}_k g}}(y_k-{t}_k\nabla f(y_k)), \notag
 \end{align*}
where $\beta_k\in[0,1]$ is the extrapolation coefficient,  $t_k$ is the stepsize depending on the Lipschitz constant of $\nabla f(x)$.
 Based on the Nesterov's idea, Beck and Teboulle \cite{beckFastIterativeShrinkageThresholding2009a} developed a specific recurrence relation and exhibited  the remarkable Fast Iterative Shrinkage Thresholding Algorithm (FISTA) for the convex case and the convergence rate $O(\frac{1}{k^2})$ of the objective function is obtained. Besides, some variants of FISTA are established by choosing appropriate extrapolation parameters, we refer the reader to
 \cite{beckFastIterativeShrinkageThresholding2009a,chambolleConvergenceIteratesFast,liangFasterFISTA} and their references therein. We also observe that Wen et al. \cite{wenLinearConvergenceProximal2017} established the proximal gradient algorithm with extrapolation (PGe) for the case with gradient Lipschitz continuously differentiable
but nonconvex loss function. The results about the iterative sequence and objective function sequence converging R-linearly to a stationary point of the problem  are  obtained under the error bound condition \cite{Luo and Tseng}.

From the literatures mentioned above, all of the efficient accelerated algorithms require a restrictive  assumption that $f$ is a smooth function. Actually, the function $f$ in problem (\ref{1.2}) is always nonsmooth in many practical applications, such as the $l_1$ loss function \[f(x)=\frac{1}{m}\Vert Ax-b\Vert_1,\] and the censored regression problem
\begin{align*}
f(x)
=\frac{1}{pm} \sum_{i=1}^{m} \left\{\vert\max \left\{  A_ix-c_i,0 \right\}-b_i   \vert  \right\}^p,
\end{align*}
where $p\in[1,2]$, $A^T_i\in R^n$ and $c_i,b_i\in R, i=1,2,\cdots,m.$ Few researchers have focused on the accelerated algorithm  when $f$ is nonsmooth, for this, the nature consideration is to smooth it by virtue of the smoothing method which is a well recognized technique for the nonsmooth optimization problem. One can solve it by an appropriate smooth approximation and solve a sequence of smooth problems  gradually approaching to the original problem.
Smooth approximation method for optimization problems involves many kinds of problems,
one can refer to the literatures
\cite{chenSmoothingMethodsNonsmooth2012b,weibianSmoothingNeuralNetwork2012,chenSmoothingMethodMathematical2004,chenSmoothingMethodsSemismooth2000,huSmoothingApproachNash2012,nesterovSmoothingTechniqueIts2004,polyakNonlinearRescalingVs2002a,qiNewLookSmoothing2000,zhangSmoothingProjectedGradient2009} and their references therein.
It needs to mention that Nesterov \cite{nesterovSmoothMinimizationNonsmooth2005c} proposed an accelerated gradient algorithm based on a smoothing technique for problem (\ref{1.1a}) with $g(x)=0$ and improved the efficiency estimate from $O(\frac{1}{{\epsilon}^2})$ to
$O(\frac{1}{{\epsilon}})$ . We also observe that \cite{beckSmoothingFirstOrder2012a}  proposed a unifying framework that combines smoothing approximation based on the extension of the Moreau-infimal convolution technique with fast first order algorithm and proved its efficiency estimate.

Recently, we observe that Wu et al. \cite{Wu F Bian W} put forward a smoothing
fast iterative hard thresholding (SFIHT) algorithm solving $l_0$ regularized nonsmooth convex problem (\ref{1.1}) which together the smoothing approximations, extrapolation techniques
with iterative hard thresholding methods.  The extrapolation coefficient can be
achieved $\sup_k \beta_k= 1 $ in the SFIHT algorithm. They discussed the convergence of the SFIHT algorithm and obtained the convergence rate $O(\frac{lnk}{k})$ of the objective function.

To the best of our knowledge, there no scholars have studied the accelerated algorithm for the relaxation model (\ref{1.2}) of the $l_0$ regularized problem when $f$ is nonsmooth.
Benefited from acceleration technique and smoothing methods mentioned above,
we propose a SPGE algorithm for the relaxation model (\ref{1.2}) of (\ref{1.1}). Since the extrapolation item is added, the convergence can't be easily got directly. But we can discuss its convergence by constructing the appropriate auxiliary sequence.
We mainly analyze the convergence of the SPGE algorithm when the extrapolation term satisfying
$ \beta_k\in[0,\sqrt{(1-a\frac{\mu_{k+1}}{\mu_k})\frac{\mu_{k+1}}{\mu_k}}]$ with $0<a\ll 1$ which is a fairly general choice of extrapolation parameters, in particular,  the parameters can be chosen as in s-FISTA proposed by Bian \cite{WBA} with fixed restart \cite{Adaptiverestart}. Though the range of the extrapolation term slightly smaller compare with \cite{Wu F Bian W} , the better theoretical  results can be obtained. The result that  the sequence $\{ \Vert x^k-x^{k-1}\Vert  \}$ is summable and the whole sequence $\{x^k\}$ is convergent.  Further, the convergence rate with respect to proximal residual is obtained. 

The paper is organised as follows. In Section 2, we present some brief and useful preliminaries derived
from \cite{bianSmoothingProximalGradient2020}. In Section 3, we  propose the SPGE algorithm and analysis its the convergence behaviour.   It is proved that any accumulation point of a subsequence generated by SPGE algorithm is a lifted stationary point of the continuous relaxation model. Further, the whole sequence $\{x^k\}$ generated by SPGE algorithm convergences to a lifted stationary point of the relaxition model (\ref{1.2}).
 Besides, the convergence rate as regards the squared proximal residual is obtained.
In Section 4, we show that the efficiency of the proposed SPGE algorithm. Finally, we give some conclusions in Section 5.

\textbf{Notations.} Given a vector $x\in R^n$,  $ \Vert x\Vert_2$ is replaced by ${\Vert x\Vert}$. Denote $\mathbb{N}=\left\{0,1,2,\cdots \right\}$, $\mathbb{D}^n=\left\{d\in R^n: d_i\in \left\{1,2,3 \right\},i\\=1,2,\cdots,n\right\}$. For a nonempty, closed, convex set $\mathcal{X}\subseteq R^n$, $N_{\mathcal{X}}(x)$ represents the normal cone to $\mathcal{X}$ at $x\in\mathcal{X}$. $\textbf{1}_n\in R^n$ be the all ones vector and $e_i\in R^n$ be the $i$ column of the n-dimensional identity matrix. Denote $\partial f(x)$ be a Clarke subgradient of $f$ at $x\in R^n$ where $f(x): R^n\to R$ is a locally Lipschitz continuous function.

\section{Preliminaries and main results}\label{sec2}
 In this section, we first briefly recall the definition which will be useful for the analysis in this paper.

\begin{definition}\label{def2.1}(lifted stationary point
 {\cite{bianSmoothingProximalGradient2020,pangComputingBStationaryPoints2017}})
We say that $x\in \mathcal{X}$ is a lifted stationary point of (\ref{1.2}) if there exist $d_i\in \mathcal{D}(x_i)$ for $i=1,2,\cdots,n$ such that
\begin{align}
\lambda\sum_{i=1}^{n}\theta'_{d_i}(x_i)e_i\in\partial f(x)+\frac{\lambda}{v}\partial(\sum_{i=1}^{n}\vert x_i\vert)+N_{\mathcal{X}}(x).\label{2.2}
\end{align}
\end{definition}

 We make the following assumption in the convergence analysis.
 \begin{assumption} \label{assumption 2}
The parameter $v$ in (\ref{1.3}) satisfies $0<v< \overline v:=\frac{\lambda}{L_f}$.
\end{assumption}
\begin{assumption} \label{assumption 3}
 $\mathcal{F}$ in (\ref{1.2})(or $\mathcal{F}_{l_0}$ in (\ref{1.1}) ) is level bounded on $\mathcal{X}$.
\end{assumption}

Next we present the following lemmas which characterise the lifted stationary point of (\ref{1.2}) under the Assumption \ref{assumption 2},\ref{assumption 3}  and show the relationship between the lifted stationary point, local minimizers and global minimizers of (\ref{1.2}),(\ref{1.1}).
\begin{lemma}\label{lemma2.2}\rm{(\cite{bianSmoothingProximalGradient2020})}
If $\overline x$ is a lifted stationary point of (\ref{1.2}), then the vector
 $d^{\overline x}=(d_{1}^{\overline x},\cdots,d_{n}^{\overline x})\in \prod_{i=1}^{n}\mathcal{D}({\overline x}_i)$ satisfying (\ref{2.2}) is unique. In particular, for $i=1,2,\cdots, n$,
  \begin{equation}\label{2.3}
d_{i}^{\overline x}= \begin{cases}
1 &\text{ if } \vert{\overline x}_i\vert <v,\\
2 &\text{ if } {\overline x}_i\ge v ,\\
3 &\text{ if } {\overline x}_i\leq -v.
\end{cases}
\end{equation}
\end{lemma}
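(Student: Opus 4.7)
The plan is to establish uniqueness coordinate by coordinate, exploiting the fact that $\Phi$, $\sum_i|x_i|$ and $N_{\mathcal{X}}$ are all separable, so condition (2.2) decomposes into $n$ scalar conditions, and the $i$-th component of the left-hand side depends only on $d_i$. Thus it suffices to show that for each $i$, the value $d_i^{\overline x}\in\mathcal{D}(\overline x_i)$ compatible with the $i$-th component of (2.2) is uniquely determined, and to identify which element of $\{1,2,3\}$ it is.

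First I would dispose of the easy cases. A direct computation from $\theta_1(t)=0$, $\theta_2(t)=t/v-1$, $\theta_3(t)=-t/v-1$ shows that $\mathcal{D}(t)$ is a singleton whenever $|t|\neq v$, namely $\mathcal{D}(t)=\{1\}$ if $|t|<v$, $\mathcal{D}(t)=\{2\}$ if $t>v$, and $\mathcal{D}(t)=\{3\}$ if $t<-v$. In each of these cases the required membership $d_i^{\overline x}\in\mathcal{D}(\overline x_i)$ by itself already forces the value claimed in (\ref{2.3}); no use of the stationarity relation is needed.

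The only potentially ambiguous cases are the boundary points $\overline x_i=v$ (where $\mathcal{D}(\overline x_i)=\{1,2\}$) and $\overline x_i=-v$ (where $\mathcal{D}(\overline x_i)=\{1,3\}$). I will handle $\overline x_i=v$ and argue the other case symmetrically. At $\overline x_i=v>0$: since $u_i\in\{0,+\infty\}$ and $u_i\geq v>0$, necessarily $u_i=+\infty$, so $\overline x_i$ lies in the interior of $[l_i,u_i]$ and hence $(N_{\mathcal{X}}(\overline x))_i=\{0\}$; moreover $\partial|\cdot|$ at $v$ is the singleton $\{1\}$. Reading the $i$-th coordinate of (\ref{2.2}) under the hypothetical choice $d_i=1$ (so that $\theta'_{d_i}(\overline x_i)=0$) therefore forces the existence of $\xi\in\partial f(\overline x)$ with $\xi_i=-\lambda/v$. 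But $f$ is $L_f$-Lipschitz on $\mathcal{X}$, so every element of the Clarke subdifferential satisfies $\|\xi\|\leq L_f$, giving $|\xi_i|\leq L_f$. Invoking Assumption \ref{assumption 2}, i.e.\ $\lambda/v>L_f$, yields a contradiction, so $d_i=1$ is impossible and $d_i^{\overline x}=2$, matching (\ref{2.3}).

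The main obstacle is precisely this boundary analysis: away from $|\overline x_i|=v$ the structure of $\mathcal{D}$ already does all the work, but at $|\overline x_i|=v$ one must exploit the quantitative gap between the slope $\lambda/v$ of the DC ``concave piece'' and the Lipschitz bound $L_f$ of $f$ provided by Assumption \ref{assumption 2}. Once this is in place, combining the three coordinate-wise conclusions (the unambiguous interior cases and the two boundary cases) yields a unique $d^{\overline x}\in\prod_{i=1}^n \mathcal{D}(\overline x_i)$ given exactly by the piecewise formula (\ref{2.3}).
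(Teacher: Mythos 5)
Your proof is correct and follows essentially the same route as the source: the paper states Lemma~\ref{lemma2.2} without proof, citing Bian and Chen \cite{bianSmoothingProximalGradient2020}, whose argument is precisely your coordinate-wise one --- off the boundary values $|\overline x_i|\neq v$ the set $\mathcal{D}(\overline x_i)$ is a singleton and forces (\ref{2.3}), while at $|\overline x_i|=v$ the choice $d_i=1$ would force a subgradient component of $f$ of magnitude $\lambda/v>L_f$, contradicting Assumption~\ref{assumption 2}. One cosmetic caveat: since $f$ is only assumed Lipschitz on $\mathcal{X}$, the global bound $\Vert\xi\Vert\leq L_f$ for all Clarke subgradients need not hold at boundary points of $\mathcal{X}$, but the coordinate bound $|\xi_i|\leq L_f$ that your argument actually uses does hold, by testing the (convex) subgradient inequality along $\overline x\pm t e_i\in\mathcal{X}$, which is available exactly because of the interiority of $\overline x_i$ in $[l_i,u_i]$ that you already established.
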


\begin{lemma}\label{lemma2.3}\rm(\cite{bianSmoothingProximalGradient2020})
If $\overline x$ is a lifted stationary point of (\ref{1.2}), then it has the lower bound property, i.e.,
if ${\overline x}_i\in(-v,v)$, then ${\overline x}_i=0, \forall\, i=1,\cdots,n$.
\end{lemma}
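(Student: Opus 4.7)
The plan is to argue by contradiction: assume there exists an index $i$ with $\overline x_i \in (-v,v)$ but $\overline x_i \neq 0$, and drive it against Assumption \ref{assumption 2} via the coordinate-wise reading of the lifted stationarity condition (\ref{2.2}).

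First, I would apply Lemma \ref{lemma2.2} to pin down the auxiliary label $d_i^{\overline x}$. Since $|\overline x_i|<v$, the lemma forces $d_i^{\overline x}=1$, and so $\theta'_{d_i^{\overline x}}(\overline x_i)=\theta_1'(\overline x_i)=0$. Thus the left-hand side of (\ref{2.2}) vanishes in its $i$-th component. Next I would simplify the right-hand side at coordinate $i$. Because $l,u\in\{0,\pm\infty\}^n$ with $l\le 0\le u$, and $\overline x_i\neq 0$, the point $\overline x_i$ lies in the relative interior of $[l_i,u_i]$; hence $\bigl(N_{\mathcal X}(\overline x)\bigr)_i=\{0\}$. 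Moreover, since $\overline x_i\neq 0$, $\partial(|\overline x_i|)=\{\mathrm{sign}(\overline x_i)\}$ is a singleton. Consequently, the $i$-th coordinate of the stationarity inclusion reduces to the equality
\begin{equation*}
\bigl(\xi\bigr)_i \;=\; -\frac{\lambda}{v}\,\mathrm{sign}(\overline x_i)
\end{equation*}
for some Clarke subgradient $\xi\in\partial f(\overline x)$.

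The conclusion then follows from the Lipschitz bound on $f$: every $\xi\in\partial f(\overline x)$ satisfies $\|\xi\|\le L_f$, so in particular $|(\xi)_i|\le L_f$. Combining with the displayed equality gives $\lambda/v\le L_f$, i.e.\ $v\ge \lambda/L_f=\overline v$, contradicting Assumption \ref{assumption 2} that $v<\overline v$. This contradiction forces $\overline x_i=0$ whenever $\overline x_i\in(-v,v)$.

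I do not expect any serious technical obstacle: the argument is essentially a coordinate-wise inspection of (\ref{2.2}), and the only subtlety is verifying that the normal cone component and the subdifferential of $|\cdot|$ both collapse to the expected singletons under the hypotheses on $l,u$ and $\overline x_i\neq 0$. The strict inequality $v<\overline v$ in Assumption \ref{assumption 2} is exactly the lever that turns the Lipschitz bound into a strict contradiction, so the proof is clean once Lemma \ref{lemma2.2} has been invoked to eliminate the $\theta'_{d_i}$ term.
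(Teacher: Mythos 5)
The paper itself states this lemma without proof, importing it from Bian and Chen \cite{bianSmoothingProximalGradient2020}, and your argument is essentially the standard proof from that reference: $|\overline x_i|<v$ forces $d_i^{\overline x}=1$ via Lemma \ref{lemma2.2}, the box structure of $\mathcal X$ together with $\overline x_i\neq 0$ collapses the normal-cone and $l_1$-subdifferential components at coordinate $i$ to $\{0\}$ and $\{\mathrm{sign}(\overline x_i)\}$ respectively, and Assumption \ref{assumption 2} turns $|\xi_i|=\lambda/v$ into the contradiction $\lambda/v\le L_f$. One small repair is worth noting: since $f$ is only assumed Lipschitz on $\mathcal X$, the blanket bound $\Vert\xi\Vert\le L_f$ for every Clarke subgradient $\xi\in\partial f(\overline x)$ can fail when $\overline x$ lies on the boundary of $\mathcal X$ (the function may grow steeply outside $\mathcal X$), but the coordinate bound $|\xi_i|\le L_f$ that you actually use does hold, because $\overline x_i$ is interior to $[l_i,u_i]$, so $\overline x\pm te_i\in\mathcal X$ for small $t>0$ and convexity of $f$ gives $\pm t\,\xi_i\le f(\overline x\pm te_i)-f(\overline x)\le L_f\,t$.
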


\begin{lemma}\label{lemma2.4}\rm(\cite{bianSmoothingProximalGradient2020})
If $\overline x$ is a lifted stationary point of (\ref{1.2}), then it is a local minimizer of (\ref{1.1}) and the objective functions have the same value at $\overline x$.
\end{lemma}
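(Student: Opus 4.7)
The argument naturally splits into verifying the equal-values claim at $\overline x$ and then showing that $\overline x$ is a local minimizer of (\ref{1.1}).

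First, I would invoke Lemma \ref{lemma2.3} so that each coordinate $\overline x_i$ lies in $\{0\} \cup \{t : |t| \geq v\}$. On the zero case $\phi(\overline x_i) = 0$ matches the $l_0$ contribution, while on $|\overline x_i| \geq v$ the cap activates, giving $\phi(\overline x_i) = 1$, again matching. Summing across coordinates yields $\Phi(\overline x) = \|\overline x\|_0$ and hence $\mathcal F(\overline x) = \mathcal F_{l_0}(\overline x)$.

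Next, I would fix a neighbourhood radius $r < \min\{v, \lambda/L_f\}$, which is positive by Assumption \ref{assumption 2}, and show $\mathcal F_{l_0}(x) \geq \mathcal F_{l_0}(\overline x)$ for every $x \in \mathcal X$ with $\|x - \overline x\| < r$. Because $r < v$ and the lower bound property keeps the nonzero coordinates of $\overline x$ outside $(-v, v)$, small perturbations preserve them, so $\mathrm{supp}(\overline x) \subseteq \mathrm{supp}(x)$ throughout the ball, and in particular $\|x\|_0 \geq \|\overline x\|_0$. I would then distinguish two regimes: if some coordinate $i$ with $\overline x_i = 0$ becomes nonzero, $\|x\|_0 \geq \|\overline x\|_0 + 1$, and the Lipschitz bound $f(x) - f(\overline x) \geq -L_f r$ combined with $L_f r < \lambda$ delivers $\mathcal F_{l_0}(x) - \mathcal F_{l_0}(\overline x) \geq \lambda - L_f r > 0$. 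In the complementary regime $\mathrm{supp}(x) = \mathrm{supp}(\overline x)$, so the $l_0$ terms agree and the task reduces to $f(x) \geq f(\overline x)$.

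This reduced step is the main obstacle. I would unpack (\ref{2.2}) coordinatewise to obtain $g \in \partial f(\overline x)$, $\eta \in \partial \|\overline x\|_1$, and $w \in N_{\mathcal X}(\overline x)$ with $\lambda \theta'_{d_i^{\overline x}}(\overline x_i) = g_i + (\lambda/v)\eta_i + w_i$ for each $i$. Lemma \ref{lemma2.2} fixes $d_i^{\overline x} \in \{2, 3\}$ whenever $\overline x_i \neq 0$, and a direct computation shows that the corresponding $\theta'_{d_i^{\overline x}}(\overline x_i)$ equals $\mathrm{sign}(\overline x_i)/v$, which is exactly the forced value of $\eta_i$ on $\mathrm{supp}(\overline x)$. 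The two $\lambda/v$ contributions therefore cancel and give $g_i + w_i = 0$ on $\mathrm{supp}(\overline x)$. Because $x - \overline x$ is supported on $\mathrm{supp}(\overline x)$ in this regime, convexity of $f$ yields $f(x) - f(\overline x) \geq g^T(x - \overline x) = -w^T(x - \overline x) \geq 0$, the last inequality being the normal cone characterisation. Combining the two regimes with the first step completes the proof.
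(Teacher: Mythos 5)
Your proof is correct, and it is worth noting that the paper itself offers no proof of this lemma at all: it is quoted verbatim from Bian and Chen \cite{bianSmoothingProximalGradient2020}, so the comparison below is with the argument in that reference rather than with anything in this paper. Your equal-values step (Lemma \ref{lemma2.3} forces each $\overline x_i\in\{0\}\cup\{t:\vert t\vert\ge v\}$, hence $\Phi(\overline x)=\Vert\overline x\Vert_0$) is the standard one. For local minimality you argue directly on the ball of radius $r<\min\{v,\lambda/L_f\}$: the support of $\overline x$ can only grow there; if it strictly grows, the extra $\lambda$ beats the Lipschitz loss $L_f r$; if it is unchanged, you cancel the $\lambda/v$ contributions in (\ref{2.2}) on the support (via Lemma \ref{lemma2.2} and $\theta_2'=1/v$, $\theta_3'=-1/v$, which force $\eta_i=\mathrm{sign}(\overline x_i)$) to get $g_i+w_i=0$ there, and then $f(x)-f(\overline x)\ge g^T(x-\overline x)=-w^T(x-\overline x)\ge 0$ by convexity and the normal cone inequality. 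All of these steps check out; in particular, the replacement of the partial sum over $\mathrm{supp}(\overline x)$ by the full inner product with $w$ is legitimate precisely because $x-\overline x$ vanishes off the support in that regime. The cited source takes a slicker route: since the $\theta_{d_i}$ are affine, condition (\ref{2.2}) is exactly the global first-order optimality condition for the convex function $f+\lambda\Phi^{d^{\overline x}}$ over $\mathcal X$, so $f(x)+\lambda\Phi^{d^{\overline x}}(x)\ge f(\overline x)+\lambda\Phi^{d^{\overline x}}(\overline x)$ for every $x\in\mathcal X$, while a coordinatewise check shows $\Phi^{d^{\overline x}}(x)\le\Vert x\Vert_0$ for all $x$ with $\Vert x-\overline x\Vert<v$, with equality at $\overline x$; local minimality then drops out in one line, with no case split and no appeal to $L_f$ at the comparison step. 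Your version is more elementary and makes visible exactly where Assumption \ref{assumption 2} enters (through Lemma \ref{lemma2.3} and the radius bound $r<\lambda/L_f$); the convex-majorant version is shorter and yields a neighborhood radius depending only on $v$.
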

\begin{lemma}\label{lemma2.4a}\rm(\cite{bianSmoothingProximalGradient2020})
 $\overline x$ is a lifted stationary point of (\ref{1.2}) and satisfies $\vert x_i\vert>v $ when $x_i\ne 0, \forall i$ if and only if  it is a local minimizer of (\ref{1.2}).
\end{lemma}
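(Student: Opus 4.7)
The plan is to exploit that under the extra strict bound $|\overline x_i|>v$ for $\overline x_i\ne 0$, the relaxation objective $\mathcal F$ coincides on a small neighborhood of $\overline x$ with a fully convex function; both directions of the equivalence then reduce to translating a standard convex subdifferential inclusion into or out of the lifted-stationary identity~(\ref{2.2}).

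For the ``only if'' direction, I would first rule out $0<|\overline x_i|\le v$: assuming $0<\overline x_i\le v$, the perturbation $x=\overline x-t e_i$ lies in $\mathcal X$ for small $t>0$ (since $l_i\le 0$), and the identity $\phi(\overline x_i-t)-\phi(\overline x_i)=-t/v$ together with $f(x)-f(\overline x)\le L_f t$ yields $\mathcal F(x)-\mathcal F(\overline x)\le t(L_f-\lambda/v)<0$ by Assumption~\ref{assumption 2}, contradicting local minimality; the symmetric argument excludes $-v\le \overline x_i<0$. Writing $I_0=\{i:\overline x_i=0\}$ and $I_+=\{i:|\overline x_i|>v\}$, there is a ball $B$ around $\overline x$ on which $\phi(x_i)=|x_i|/v$ for $i\in I_0$ and $\phi(x_i)\equiv 1$ for $i\in I_+$, so $\mathcal F$ agrees on $B\cap\mathcal X$ with the convex function $h(x):=f(x)+(\lambda/v)\sum_{i\in I_0}|x_i|+\lambda|I_+|$. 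Convexity upgrades local to global minimality of $h+\delta_{\mathcal X}$, producing $g\in\partial f(\overline x)$, $\eta\in N_{\mathcal X}(\overline x)$ and $s$ with $s_i\in[-1,1]$ on $I_0$, $s_i=0$ on $I_+$, satisfying $g+(\lambda/v)s+\eta=0$. Replacing $s_i$ by $\mathrm{sign}(\overline x_i)$ on $I_+$ yields a bona fide element of $\partial\|\cdot\|_1(\overline x)$, and the added $\pm\lambda/v$ on those coordinates equals $\lambda\theta'_{d_i^{\overline x}}(\overline x_i)$ by Lemma~\ref{lemma2.2}, delivering (\ref{2.2}).

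The ``if'' direction runs the translation in reverse: by Lemma~\ref{lemma2.2} and the strict bound, the vector $\lambda\sum_i\theta'_{d_i^{\overline x}}(\overline x_i)e_i$ is $0$ on $I_0$ and $\pm\lambda/v$ on $I_+$ according to $\mathrm{sign}(\overline x_i)$; in (\ref{2.2}) these $I_+$-contributions are cancelled exactly by the forced $\pm\lambda/v$ coming from the $\|\cdot\|_1$-subgradient at $\overline x_i\ne 0$, leaving $0\in\partial h(\overline x)+N_{\mathcal X}(\overline x)$. Convexity of $h$ then lifts this to a global minimum of $h$ on $\mathcal X$, hence a local minimum of $\mathcal F$ on $B\cap\mathcal X$.

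The main obstacle I anticipate is not the existence of the perturbation but the bookkeeping of the cancellation on $I_+$: the strict inequality $|\overline x_i|>v$ is essential both for localizing $\mathcal F$ as the convex $h$ (no kink of $\phi$ at $\pm v$ is crossed under small perturbations) and for making $\mathrm{sign}(\overline x_i)$ single-valued, so that the $\|\cdot\|_1$-subgradient coordinate matches $\lambda\theta'_{d_i^{\overline x}}(\overline x_i)$ exactly. The borderline $|\overline x_i|=v$ case breaks both properties simultaneously and is precisely what the perturbation step in the ``only if'' direction must exclude.
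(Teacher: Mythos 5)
Your proposal is correct, but there is nothing in this paper to compare it against: Lemma~\ref{lemma2.4a} is stated without proof, being quoted directly from Bian and Chen \cite{bianSmoothingProximalGradient2020}. Judged on its own merits, your argument is sound and is essentially the standard one from the cited source: your convex localization $h(x)=f(x)+(\lambda/v)\sum_{i\in I_0}\vert x_i\vert+\lambda\vert I_+\vert$ coincides on the ball $B$ with $f+\lambda\Phi^{d^{\overline x}}$ from (\ref{3.1}), and the key mechanism---that (\ref{2.2}) is exactly the first-order optimality condition of the convex problem $\min_{x\in\mathcal X} f(x)+\lambda\Phi^{d^{\overline x}}(x)$, combined with the perturbation $\phi(\overline x_i-t)-\phi(\overline x_i)=-t/v$ and Assumption~\ref{assumption 2} to exclude $0<\vert\overline x_i\vert\le v$ (including the borderline $\vert\overline x_i\vert=v$)---is precisely how the equivalence is obtained there. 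Two cosmetic remarks: in the ``only if'' direction you should not invoke Lemma~\ref{lemma2.2}, since it presupposes that $\overline x$ is a lifted stationary point, which is what you are in the process of establishing; instead note directly that under the strict bound $\mathcal D(\overline x_i)$ is the singleton $\{1\}$ on $I_0$ and $\{2\}$ or $\{3\}$ on $I_+$, so $\theta'_{d_i}(\overline x_i)\in\{0,\pm 1/v\}$ by direct computation, which gives the same bookkeeping. You should also record that splitting $\partial(h+\delta_{\mathcal X})(\overline x)$ into the three terms of (\ref{2.2}) is licensed by the Moreau--Rockafellar sum rule, valid here because $f$ is finite-valued convex (hence continuous) and the $\ell_1$ term is polyhedral; neither point affects the validity of the proof.
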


\begin{lemma}\label{lemma2.5}\rm(\cite{bianSmoothingProximalGradient2020})
 $\overline x$ is a global minimizer of (\ref{1.2}) if and only if it is a global minimizer of (\ref{1.1}) and the objective functions have the same value at $\overline x$.
\end{lemma}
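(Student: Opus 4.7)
The plan is to use the basic pointwise inequality $\phi(t)\le \mathbb{1}_{\{t\neq 0\}}$ (the capped-$l_1$ function is dominated by the indicator of nonzero support), which immediately gives $\mathcal{F}(x)\le \mathcal{F}_{l_0}(x)$ for every $x\in\mathcal{X}$, and to combine it with Lemma \ref{lemma2.4} to relay information between the two problems. I would organise the argument as two implications.

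For the forward direction, let $\overline x$ be a global minimizer of (\ref{1.2}). Then $\overline x$ is in particular a local minimizer of (\ref{1.2}), and every local minimizer satisfies the first-order optimality condition (\ref{2.2}), so $\overline x$ is a lifted stationary point of (\ref{1.2}). Lemma \ref{lemma2.4} then yields $\mathcal{F}(\overline x)=\mathcal{F}_{l_0}(\overline x)$. For any $x\in\mathcal{X}$ the pointwise inequality $\Phi(x)\le \|x\|_0$ gives $\mathcal{F}(x)\le \mathcal{F}_{l_0}(x)$, and using global optimality of $\overline x$ for (\ref{1.2}) we chain
\begin{equation*}
\mathcal{F}_{l_0}(\overline x)=\mathcal{F}(\overline x)\le \mathcal{F}(x)\le \mathcal{F}_{l_0}(x),
\end{equation*}
which shows $\overline x$ is a global minimizer of (\ref{1.1}).

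For the reverse direction, suppose $\overline x$ is a global minimizer of (\ref{1.1}) with $\mathcal{F}(\overline x)=\mathcal{F}_{l_0}(\overline x)$. By Assumption \ref{assumption 3} together with lower semicontinuity of $\mathcal{F}$, problem (\ref{1.2}) attains its infimum at some $y^{\ast}\in\mathcal{X}$. Applying the forward direction just established to $y^{\ast}$, we get $\mathcal{F}(y^{\ast})=\mathcal{F}_{l_0}(y^{\ast})$ and $y^{\ast}$ is also a global minimizer of (\ref{1.1}), hence $\mathcal{F}_{l_0}(y^{\ast})=\mathcal{F}_{l_0}(\overline x)$. Combining these equalities,
\begin{equation*}
\mathcal{F}(\overline x)=\mathcal{F}_{l_0}(\overline x)=\mathcal{F}_{l_0}(y^{\ast})=\mathcal{F}(y^{\ast})=\min_{x\in\mathcal{X}}\mathcal{F}(x),
\end{equation*}
so $\overline x$ is a global minimizer of (\ref{1.2}).

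There is no substantive obstacle here once Lemma \ref{lemma2.4} is available; the only thing that requires a moment of care is the existence of a minimizer of (\ref{1.2}) needed in the reverse direction, which I would extract from the level boundedness hypothesis of Assumption \ref{assumption 3} (noting $\mathcal{F}\le\mathcal{F}_{l_0}$ so level boundedness of $\mathcal{F}_{l_0}$ transfers to $\mathcal{F}$ on bounded level sets relative to any fixed value) and lower semicontinuity of $\phi$ and $f$.
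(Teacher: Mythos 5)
The paper itself offers no proof of Lemma~\ref{lemma2.5}: it is imported verbatim from \cite{bianSmoothingProximalGradient2020}, so your proposal can only be compared with the standard exactness argument in that reference, which it essentially reconstructs. Both of your implications are sound. In the forward direction, the step ``every local minimizer of (\ref{1.2}) satisfies (\ref{2.2})'' should not be asserted as a generic first-order condition (the objective is nonconvex and nonsmooth, and lifted stationarity is a DC-structured condition); within this paper it is exactly the ``only if'' half of Lemma~\ref{lemma2.4a}, which you should cite, after which Lemma~\ref{lemma2.4} gives $\mathcal{F}(\overline x)=\mathcal{F}_{l_0}(\overline x)$ and the pointwise bound $\Phi(x)\le\Vert x\Vert_0$ closes the chain. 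Your reverse direction through a minimizer $y^{*}$ of (\ref{1.2}) is also correct, and in fact shows the same-value hypothesis there is redundant: since $\mathcal{F}(\overline x)\le\mathcal{F}_{l_0}(\overline x)$ always holds, the chain $\mathcal{F}(\overline x)\le\mathcal{F}_{l_0}(\overline x)=\mathcal{F}_{l_0}(y^{*})=\mathcal{F}(y^{*})=\min_{x\in\mathcal{X}}\mathcal{F}(x)$ forces equality throughout, so global minimality for (\ref{1.1}) alone yields both conclusions.

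The one statement that is wrong as written is your parenthetical justification of the existence of $y^{*}$: the inequality $\mathcal{F}\le\mathcal{F}_{l_0}$ transfers level boundedness in the wrong direction, since it gives $\{x\in\mathcal{X}:\mathcal{F}_{l_0}(x)\le c\}\subseteq\{x\in\mathcal{X}:\mathcal{F}(x)\le c\}$, i.e.\ the level sets of $\mathcal{F}$ are the \emph{larger} ones. The correct one-line fix is the bounded-difference estimate $0\le\lambda\bigl(\Vert x\Vert_0-\Phi(x)\bigr)\le\lambda n$, which yields $\{x\in\mathcal{X}:\mathcal{F}(x)\le c\}\subseteq\{x\in\mathcal{X}:\mathcal{F}_{l_0}(x)\le c+\lambda n\}$ and hence makes level boundedness of $\mathcal{F}$ and of $\mathcal{F}_{l_0}$ equivalent, so either reading of Assumption~\ref{assumption 3} suffices; combined with continuity of $\mathcal{F}$ on the closed set $\mathcal{X}$ ($f$ is Lipschitz and $\phi$ is continuous, so lower semicontinuity need not be invoked), Weierstrass gives the minimizer you need. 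With that repair the argument is complete and follows the same route as the cited source.
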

\section{The smoothing proximal gradient algorithm with extrapolation and its convergence analysis}\label{sec3}
    Combining the smoothing method and the proximal gradient method with extrapolation, we propose the SPGE     algorithm and discuss its convergence in this section. For convenience, we first exhibit the definition of smoothing function deriving from \cite{bianSmoothingProximalGradient2020}.
\begin{definition}\label{def3.1}
We call $\widetilde f:R^n\times [0,\overline \mu]\to{R}$ with $\overline \mu>0$ a smoothing function of the function $f$ in (\ref{1.2})   if $\widetilde f(\cdot,\mu)$ is continuous differentiable in $R^n$ for any fixed $\mu>0$ and satisfies the following conditions:
\begin{enumerate}[(i)]
  \item  $ {\lim_{z\to x,\mu\downarrow 0}}\widetilde f(z,\mu)=f(x) , \quad \forall x\in\mathcal{X}$;
   \item  $\widetilde f(x,\mu)$ is convex with respect to $x$ in $\mathcal{X}$ and for any fixted $\mu>0$;
     \item $ \{\lim_{z\to x,\mu\downarrow 0}\nabla_z\widetilde f(z,\mu)\}\subseteq\partial f(x) , \quad \forall x\in\mathcal X$;
  \item there exists a positive constant $\kappa$ such that
\[
\vert \widetilde f(x,\mu_2)-\widetilde f(x,\mu_1)\vert\leq\kappa\vert \mu_1-\mu_2\vert,\quad \forall x\in\mathcal{X},  \mu_1,\mu_2\in[0,\overline\mu];
\]
especially,
$
\vert \widetilde f(x,\mu)- f(x)\vert\leq\kappa  \mu ,\quad \forall x\in\mathcal{X},  \mu\in (0,\overline\mu];
$
  \item for any $\mu\in(0,\overline\mu]$, there exists a constant $\widetilde L>0$ such that $\nabla_x\widetilde f(\cdot,\mu)$ is Lipschitz continuous on $3\mathcal {X}$ with Lipschitz constant $\widetilde L{\mu}^{-1}$.
\end{enumerate}
\end{definition}
\begin{remark}
	The definition in (v) says that $\nabla_x\widetilde f(\cdot,\mu)$ is Lipschitz continuous on $3\mathcal{X}$ since the extrapolation point $y^k =x^k+\beta_k(x^k-x^{k-1})$ in the following algorithm may not  belong to $\mathcal{X}$ and we need to restrict it on $3\mathcal{X}$.
	\end{remark}
   For a given $d=(d_1,d_2,\cdots,d_n)^T\in \mathbb{D}^n$, let
\begin{align}\label{3.1}
\Phi^d(x):=\sum_{i=1}^{n}(\frac{\vert x_i\vert}{v}-\theta_{d_i}(x_i)),
\end{align}
then
 \begin{align}\label{3.2}
 \Phi(x)=\mathop{\min}_{d\in\mathbb{D}^n}\Phi^d(x), \quad \forall x\in \mathcal{X}.
 \end{align}
 And especially  for a fixed $\overline x\in \mathcal{X}$,
  $\Phi(\overline x)=\Phi^{d^{\overline x}}(\overline x)$, where $d^{\overline x}$ is defined in (\ref{2.3}).
\[  \mathcal{\widetilde F}^d(x,\mu):= \widetilde f(x,\mu)+ \lambda\Phi^d(x),\,
 \mathcal{\widetilde F}(x,\mu):= \widetilde f(x,\mu)+ \lambda\Phi(x),
\]
where $\widetilde f$ is a smoothing function of $f$. For any fixed $\mu>0$, and $d\in \mathbb{D}^n$, $ \mathcal{\widetilde F}^d(x,\mu)$ is convex function.\\
By (\ref{3.2}),
it holds that
\begin{align}\label{3.3}
\mathcal{\widetilde F}^d (x,\mu)\ge \mathcal{\widetilde F}(x,\mu)\quad \forall d\in\mathbb D^n, x\in\mathcal{X},\mu\in(0,\overline\mu].
\end{align}
 In the following, we focus on the constrained convex optimization problem with a given smoothing parameter $\mu>0$, $d\in \mathbb{D}^n$:
\begin{align}\label{3.4}
 \mathcal{\widetilde F}^d(x,\mu):= \widetilde f(x,\mu)+ \lambda\Phi^d(x) .
\end{align}
Let $d\in\mathbb{D}^n, w\in R^n$ and $\tau>0$. Due to the fact that penalty item of  (\ref{3.4}) is piecewise linear,
  the proximal operator of $\tau\Phi^d$ on $\mathcal{X}$
 \[{ prox_{\tau\Phi^d}(w)}={\rm\mathop{argmin}}_{\mathcal X}\left\{\tau\Phi^d(x)+\frac{1}{2}{\Vert x-w \Vert}^2\right\}\]
    has a closed form solution $\hat x=(\hat x_1, \cdots, \hat x_n)$ with
  $\hat x_i=\mathop{\min}\left\{\max \left\{l_i,h_i\right\},u_i\right\} $,
 $\forall i=1,2,\cdots,n,$
 where  \begin{equation}\label{3.6}
h_{i}= \begin{cases}
0 &\text{ if } \vert\widetilde w_i\vert\leq\tau/v,\\
\widetilde w_i-\tau/v,&\text{ if } \widetilde w_i > \tau/v ,\\
\widetilde w_i+\tau/v,&\text{ if } \widetilde w_i<-\tau/v,
\end{cases}
\end{equation}
where
 \begin{equation*}\label{3.6a}
\widetilde w_i= \begin{cases}
w_i, &\text{ if }d_i=1, \\
w_i+\tau/v,&\text{ if }  d_i=2 ,\\
w_i-\tau/v,&\text{ if } d_i=3.
\end{cases}
\end{equation*}

 Based on the proximal gradient method, we consider the approximation $Q_{d^k}(x,y^k,\mu_k)$ of $\widetilde {\mathcal F}^d(\cdot,\mu_k)$ and the given point $y^k$  is involved  two previous iterations,
 specifically as follows:
\begin{equation*}\label{3.7}
\begin{cases}
	y^k=x^k+\beta_{k-1}(x^k-x^{k-1}),\\
Q_{d^k}(x,y^k,\mu_k)\!=\!\widetilde f(y^k,\mu_k)\!+\!\langle x-y^k, \nabla \widetilde f(y^k,\mu_k)\rangle \!+\!\frac{1}{2}L{\mu^{-1}_k}{\Vert x-y^k \Vert}^2+\lambda{{\Phi}^{d^k}}(x),

\end{cases}
\end{equation*}
where $\beta_{k-1}$ is an extrapolation parameter, $L$ is any constant satisfing $L\ge \widetilde L$.  It is observed that the problem
 $ \mathop{\min}\limits_x{Q_{d^k}(x,y^k,\mu_k)}$ has a unique minimizer $\hat x$ since
  $ Q_{d^k}(x,y^k,\mu_k)$ is a strongly convex function with respect to $x$ for any fixed $d^k,\mu_k, y^k$, where $d^k =d^{x^k},\mu_k$ is the smoothing parameter. The minimizer can be obtained by (\ref{3.6}) with $\tau=\lambda L^{-1}\mu_k$ and $w^k=y^k-{L}^{-1}\mu_k\nabla\widetilde f(y^k,\mu_k)$.\\
\begin{algorithm}[hptb]
\caption{SPGE algorithm} 
\label{alg1} 
\begin{algorithmic}[1] 
\STATE \textbf{Data:} $\sigma\in(0,1)$, $\alpha>0$,

\textbf{Initialization:} $x^{-1}=x^{0}\in \mathcal{X}$, ${\mu}_{-1}={\mu}_0\in(0,\overline\mu],$ $L>\widetilde L$. Set $k=0$.
\WHILE{a termination criterion is not met}
\STATE
 \textbf{step 1:} Choose $\beta_{k-1}\in[0,\sqrt{\frac{\mu_{k}}{\mu_{k-1}}})$,  and let $d^k=d^{x^k}$, where $d_i^{x^k}$ is defined in (\ref{2.3}).\\
   \textbf{step 2:}
Compute
\begin{equation}\label{3.7a}
\begin{cases}
	y^k=x^k+\beta_{k-1}(x^k-x^{k-1}),\\
  x^{k+1}= {\rm\mathop argmin}_{x\in \mathcal{X}} {Q_{d^k}(x,y^k,\mu_k)};

\end{cases}
\end{equation}

   \textbf{step 3:}
If
\begin{align}\label{3.9}
H (x^{k+1},x^k, \mu_k)+\kappa\mu_{k}-(H(x^{k},x^{k-1},\mu_{k-1})+\kappa\mu_{k-1})\leq -\alpha {\mu_k}^2,
\end{align}
set $\mu_{k+1}=\mu_k$, otherwise, set
\begin{align}\label{3.10}
\mu_{k+1}=\frac{\mu_0}{(k+1)^\sigma}.
\end{align}
set $k:=k+1$, and return to Step 1.
\ENDWHILE
\end{algorithmic}
\end{algorithm}
\begin{remark}
	It is need to be emphasized that the following convergence analysis  is discussed under the extrapolation coefficient $\beta_k$ condtition which is 
	\begin{align}\label{100}
		0\leq\beta^2_k\leq (1-a\frac{\mu_{k+1}}{\mu_k})\frac{\mu_{k+1}}{\mu_k},\quad with\,\, 0<a\ll 1.
	\end{align}
\end{remark}
\begin{remark}
	The sequence $\{H(x^{k+1},x^k,\mu_k)\}$ in the SPGE algorithm framework is specifically defined as in (\ref{3.11a}).
\end{remark}
\begin{remark}
From the above algorithm framework, it can be found that the subproblem (\ref{3.7a}) boils down to the SPG algorithm when $\beta_k=0$, therefore, the SPG algorithm is a special case of the SPGE algorithm. Besides,  when $\beta_k$ is chosen as (\ref{100}), that is $sup \beta_k\leq\sqrt{ 1-a}<1$ with $ 0<a\ll 1$, it 
 is more general and contains one special extrapolation coefficient deriving from the smoothing fast iterative shrinkage thresholding algorithm (s-FISTA) which was proposed in \cite{WBA} with fixed restart or adaptive restart system \cite{Adaptiverestart} for solving subproblem (\ref{3.7a}). That is, one reset the initial point $t_{-1}=1$, ${\mu}_{-1}={\mu}_0 $ after fixed iterations or restart the algorithm when $\langle y^{k-1}-x^k, x^k-x^{k-1}\rangle>0$. The extrapolation parameters in experiments of Section 4 is chosen as in the way of the fixed restart.
 The extrapolation coefficient in s-FISTA is defined as follows, for any $k\ge0$
\begin{align}\label{A}
t_{k }=\frac{1+\sqrt{1+4\frac{\mu_{k-1}}{\mu_{k}}{t^2_{k-1}}}}{2},  \quad  \beta_{k-1}=\frac{ t_{k-1}-1}{t_{k}},
   \end{align}
where $t_{-1}=1$.
\end{remark}

%
%

In what follows we analyze the basic convergence behaviors of the iterative sequence generated by the SPGE algorithm. Though it gets much more difficult to prove directly, added the extrapolation, we can develop the convergence analysis by constructing the following auxiliary sequence. Suppose $\{x^k\}, \{y^k\},\{\mu_k\}$ are the sequences generated by SPGE algorithm, define
\begin{align}\label{3.11a}
H(x^{k+1}, x^k,\mu_{k}):=\widetilde {\mathcal F}(x^{k+1},\mu_{k})+ \tau_k {\Vert x^{k+1}-x^k\Vert }^2,
\end{align}
where $\tau_k=\frac{L{\mu^{-1}_k}}{4}+\frac{L{\beta^2_{k}}\mu^{-1}_{k+1}}{4}$.

 It needs to be emphasised that the convergence analysis of the SPGE algorithm is heavily based on the auxiliary sequence.
\begin{lemma}\label{lemma3.2}(well defined of the SPGE algorithm)
The proposed SPGE algorithm is well defined, and the sequences $\left\{x^k\right\}$,   $\left\{\mu_k\right\}$ generated by the SPGE have the following properties.
\begin{enumerate}[(i)]
   \rm \item $\left\{x^k\right\}\subseteq \mathcal{X}$.
  \rm \item   there are infinite elements in $\mathcal{N}^s$ and $\lim_{k\to +\infty}\mu_k=0$, where
$\mathcal{N}^s=\left\{k\in\mathbb{N}: \mu_{k+1}\ne\mu_{k}\right\}.$
\end{enumerate}
 \end{lemma}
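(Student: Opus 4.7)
For part (i), I would argue by straightforward induction on $k$: the initialization places $x^0 \in \mathcal{X}$, and at each step the update in (\ref{3.7a}) defines $x^{k+1}$ as the minimizer of $Q_{d^k}(\cdot, y^k, \mu_k)$ over $\mathcal{X}$, so $x^{k+1} \in \mathcal{X}$ by construction. (The extrapolation point $y^k$ itself need not lie in $\mathcal{X}$, which is precisely why Definition \ref{def3.1}(v) asks for Lipschitz continuity on $3\mathcal{X}$, but this does not affect the iterates themselves.)

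For part (ii), I plan to first prove that $\mathcal{N}^s$ is infinite by contradiction and then deduce $\mu_k \to 0$ as a consequence. Suppose $\mathcal{N}^s$ were finite: then there exists $K_0$ such that $\mu_k = \bar\mu > 0$ for every $k \geq K_0$, which forces the test (\ref{3.9}) to succeed at every such step. Telescoping (\ref{3.9}) from $K_0$ to $N$ will produce
\begin{equation*}
H(x^{N+1}, x^N, \mu_N) + \kappa \mu_N \leq H(x^{K_0}, x^{K_0 - 1}, \mu_{K_0 - 1}) + \kappa \mu_{K_0 - 1} - \alpha \bar\mu^2 (N - K_0 + 1),
\end{equation*}
whose right-hand side tends to $-\infty$ as $N \to \infty$. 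The contradiction will close as soon as I produce a uniform lower bound on the quantity $H(x^{k+1}, x^k, \mu_k) + \kappa \mu_k$.

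The main obstacle is precisely this lower bound, and this is also the reason the auxiliary sequence in (\ref{3.11a}) is paired with the correction $\kappa \mu_k$. Using Definition \ref{def3.1}(iv) together with $f \geq 0$ gives $\widetilde f(x, \mu) \geq f(x) - \kappa \mu \geq -\kappa \mu$; the capped-$l_1$ penalty satisfies $\Phi(x) \geq 0$ by (\ref{1.3}); and $\tau_k \|x^{k+1} - x^k\|^2 \geq 0$ by construction. Combining these three facts yields $H(x^{k+1}, x^k, \mu_k) + \kappa \mu_k \geq 0$, delivering the desired contradiction and establishing that $\mathcal{N}^s$ is infinite. To conclude $\mu_k \to 0$, I will read off from (\ref{3.10}) that $\{\mu_k\}$ is non-increasing (each reset replaces $\mu$ by $\mu_0/(k+1)^\sigma$, and between resets $\mu$ is held constant), so choosing $k \in \mathcal{N}^s$ arbitrarily large forces $\mu_{k+1} = \mu_0/(k+1)^\sigma$ below any prescribed threshold, and monotonicity then propagates this bound to all subsequent indices.
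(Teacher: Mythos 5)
Your proposal is correct and is essentially the argument the paper intends: the paper's own proof of this lemma simply defers to Lemma~3.2 of Bian and Chen \cite{bianSmoothingProximalGradient2020}, which proceeds by exactly your route --- feasibility of $x^{k+1}$ by construction of the constrained subproblem, then a contradiction obtained by telescoping the descent test (\ref{3.9}) at a frozen $\bar\mu>0$ against a uniform lower bound on the Lyapunov quantity --- and your adaptation to the extrapolated sequence (using $f\ge 0$, $\Phi\ge 0$, $\tau_k\ge 0$ to get $H(x^{k+1},x^k,\mu_k)+\kappa\mu_k\ge 0$) is precisely what the extrapolation term requires. One micro-detail: "$\mu_k$ constant for $k\ge K_0$ forces (\ref{3.9}) to succeed" needs the observation that for $k\ge 1$ a failed test yields $\mu_{k+1}=\mu_0/(k+1)^\sigma<\mu_k$ strictly (at $k=0$ the reset returns $\mu_0$ itself), which is immediate and does not affect your telescoping from a large $K_0$.
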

\begin{proof}
The proof is similar to Lemma 3.2 in \cite{bianSmoothingProximalGradient2020},  the results can be easily obtained.\qed
\end{proof}
Now we show that the auxiliary  sequence $\{H(x^{k+1},x^k, \mu_k)+\kappa \mu_k  \}$
is nonincreasing when $\beta_k\in[0,\sqrt{\frac{\mu_{k+1}}{\mu_k}})$.


\begin{lemma}\label{lemma3.3}
For any $k\in \mathbb{N}$, if $\beta_k\in[0,\sqrt{\frac{\mu_{k+1}}{\mu_{k}}})$,
then
\begin{align}\label{3.11}
& H (x^{k+1},x^k,\mu_k)+\kappa \mu_k-(H(x^{k}, x^{k-1},\mu_{k-1})+\kappa\mu_{k-1}) \notag\\
\leq &   (\tau_k-\frac{1}{2}L\mu^{-1}_k) {\Vert x^{k+1}-x^k\Vert}^2-(\tau_{k-1}-\frac{1}{2}L\mu^{-1}_k\beta^2_{k-1}) {\Vert x^{k}-x^{k-1}\Vert}^2,
\end{align}
and the sequence $\{H (x^{k+1},x^k,\mu_k)+\kappa \mu_k\}$ is a nonincreasing sequence.
\end{lemma}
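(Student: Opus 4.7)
The plan is to combine the standard descent-lemma/strong-convexity argument used for proximal gradient with extrapolation and carefully track all the quadratic and smoothing-parameter remainders, so that the ``extra'' $\tau_k\|x^{k+1}-x^k\|^2$ piece in the auxiliary function $H$ absorbs the extrapolation error.

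First I would exploit that $Q_{d^k}(\cdot,y^k,\mu_k)$ is strongly convex in $x$ with modulus $L\mu_k^{-1}$ (since $\Phi^{d^k}$ is convex and the quadratic $\tfrac12 L\mu_k^{-1}\|x-y^k\|^2$ supplies the strong convexity). Because $x^{k+1}$ is the constrained minimizer over $\mathcal X$, the standard strong-convexity inequality with the test point $x^k\in\mathcal X$ gives
\begin{align*}
 Q_{d^k}(x^{k+1},y^k,\mu_k)+\tfrac{L\mu_k^{-1}}{2}\|x^k-x^{k+1}\|^2\le Q_{d^k}(x^k,y^k,\mu_k).
\end{align*}
Next, using convexity of $\widetilde f(\cdot,\mu_k)$ at $y^k$ evaluated at $x^k$, the affine lower-bound term on the right can be replaced by $\widetilde f(x^k,\mu_k)$; and using the descent lemma (property (v) of the smoothing function) at $y^k$ evaluated at $x^{k+1}$, the affine plus quadratic terms on the left majorize $\widetilde f(x^{k+1},\mu_k)$. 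These two substitutions leave only the perturbation $\tfrac{L\mu_k^{-1}}{2}\|x^k-y^k\|^2 = \tfrac{L\mu_k^{-1}\beta_{k-1}^2}{2}\|x^k-x^{k-1}\|^2$ on the right (after using $y^k-x^k=\beta_{k-1}(x^k-x^{k-1})$). Combined with $\Phi^{d^k}(x^k)=\Phi(x^k)$ and $\Phi^{d^k}(x^{k+1})\ge\Phi(x^{k+1})$, I obtain the key one-step inequality
\begin{align*}
 \widetilde{\mathcal F}(x^{k+1},\mu_k)-\widetilde{\mathcal F}(x^k,\mu_k)\le \tfrac{L\mu_k^{-1}\beta_{k-1}^2}{2}\|x^k-x^{k-1}\|^2-\tfrac{L\mu_k^{-1}}{2}\|x^{k+1}-x^k\|^2.
\end{align*}

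Then I would switch the smoothing parameter: writing $\widetilde{\mathcal F}(x^k,\mu_k)-\widetilde{\mathcal F}(x^k,\mu_{k-1})=\widetilde f(x^k,\mu_k)-\widetilde f(x^k,\mu_{k-1})$ and invoking property (iv) of Definition \ref{def3.1} together with $\mu_k\le\mu_{k-1}$ (monotonicity forced by the update rule of the algorithm), this difference is bounded by $\kappa(\mu_{k-1}-\mu_k)$, which exactly cancels when we add $\kappa\mu_k-\kappa\mu_{k-1}$ to both sides. Unfolding the definition $H(x^{k+1},x^k,\mu_k)=\widetilde{\mathcal F}(x^{k+1},\mu_k)+\tau_k\|x^{k+1}-x^k\|^2$ and regrouping coefficients then produces exactly the bound \eqref{3.11}.

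Finally, for the monotonicity claim I have to check the two bracketed coefficients on the right of \eqref{3.11} are nonpositive/nonnegative with the appropriate signs. Plugging in $\tau_k=\tfrac{L\mu_k^{-1}}{4}+\tfrac{L\beta_k^2\mu_{k+1}^{-1}}{4}$ gives
\begin{align*}
 \tau_k-\tfrac{L\mu_k^{-1}}{2}=\tfrac{L}{4}\bigl(\beta_k^2\mu_{k+1}^{-1}-\mu_k^{-1}\bigr),\qquad \tau_{k-1}-\tfrac{L\mu_k^{-1}\beta_{k-1}^2}{2}=\tfrac{L}{4}\bigl(\mu_{k-1}^{-1}-\beta_{k-1}^2\mu_k^{-1}\bigr),
\end{align*}
and the hypothesis $\beta_k^2<\mu_{k+1}/\mu_k$ (applied at indices $k$ and $k-1$) makes the first factor nonpositive and the second factor nonnegative; therefore both contributions to the right-hand side of \eqref{3.11} are $\le 0$, yielding nonincrease of $\{H(x^{k+1},x^k,\mu_k)+\kappa\mu_k\}$.

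The main obstacle is the bookkeeping in the first step: one must arrange the descent lemma, the convex lower bound, and the strong-convexity optimality so that after they are combined the coefficient of $\|x^k-y^k\|^2$ is exactly $L\mu_k^{-1}/2$, and then choose $\tau_k$ so that the resulting ``error'' term $\tfrac{L\mu_k^{-1}\beta_{k-1}^2}{2}\|x^k-x^{k-1}\|^2$ is dominated by the $\tau$-quadratic in $H(x^k,x^{k-1},\mu_{k-1})$. The specific form $\tau_k=\tfrac14(L\mu_k^{-1}+L\beta_k^2\mu_{k+1}^{-1})$ is engineered precisely so the two coefficient inequalities above hold simultaneously under the single condition $\beta_k^2<\mu_{k+1}/\mu_k$; recognizing this balance is the one nontrivial point in the proof.
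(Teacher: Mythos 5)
Your proposal is correct and follows essentially the same route as the paper's proof: the strong-convexity (three-point) inequality for $Q_{d^k}$ at the test point $x^k$, the descent lemma from Definition \ref{def3.1}(v), convexity of $\widetilde f(\cdot,\mu_k)$, the relations $\Phi^{d^k}(x^k)=\Phi(x^k)$ and $\Phi^{d^k}(x^{k+1})\ge\Phi(x^{k+1})$, the $\kappa$-bound of Definition \ref{def3.1}(iv) with the monotonicity of $\{\mu_k\}$, and finally the same sign check on $\tau_k-\tfrac{1}{2}L\mu_k^{-1}$ and $\tau_{k-1}-\tfrac{1}{2}L\mu_k^{-1}\beta_{k-1}^2$. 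The only cosmetic difference is that the paper keeps a general test point $x\in\mathcal{X}$ before specializing to $x=x^k$, whereas you substitute $x^k$ immediately; the resulting chain of inequalities is identical.
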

\begin{proof}
From the strong convexity of $Q_{d^k}(x,y^k,\mu_k)$ with the strong convex coefficient $L\mu^{-1}_k$, we have
 \begin{align}\label{3.13}
 \lambda{{\Phi}^{d^k}}(x^{k+1})\leq& \lambda{{\Phi}^{d^k}}(x)+\langle x-x^{k+1}, \nabla \widetilde f(y^k,\mu_k)\rangle+\frac{1}{2} L{\mu^{-1}_k}{\Vert x-y^k \Vert}^2 \notag\\ &-\frac{1}{2}L{\mu^{-1}_k}{\Vert x^{k+1}-y^k \Vert}^2
 -\frac{1}{2}L{\mu^{-1}_k}{\Vert x-x^{k+1} \Vert}^2,
  \end{align}
Since $\nabla_x\widetilde f(\cdot,\mu)$  has Lipschitz constant $\widetilde L{\mu_k}^{-1}$ from  the Definition \ref{def3.1}(v), it follows that
\begin{align}\label{3.14}
\widetilde f(x^{k+1},\mu_k)\leq \widetilde f(y^{k},\mu_k)+\langle x^{k+1}-y^k, \nabla \widetilde f(y^k,\mu_k)\rangle+\frac{1}{2}\widetilde L{\mu^{-1}_k}{\Vert x^{k+1}-y^k \Vert}^2.
\end{align}
Combining (\ref{3.13}) with (\ref{3.14}), we have
      \begin{align*}
\widetilde {\mathcal F}^{d^k}(x^{k+1},\mu_k)&\leq \lambda{{\Phi}^{d^k}}(x)\!+\!
 \widetilde f(y^{k},\mu_k)\!+\! \langle x-y^k, \nabla \widetilde f(y^k,\mu_k)\rangle\!+\! \frac{1}{2}L{\mu^{-1}_k}{\Vert x\!-\!y^k \Vert}^2 \notag\\
 &\quad - \frac{1}{2}L{\mu^{-1}_k}{\Vert x-x^{k+1} \Vert}^2.
 \end{align*}
Using the convexity of $\widetilde f(x, \mu_k)$ with respect to $x$, the above inequality can be rewritten as
\begin{align}
\widetilde  {\mathcal F}^{d^k}(x^{k+1},\mu_k)
&\leq \widetilde  {\mathcal F}^{d^k}(x,\mu_k)+
\frac{1}{2}L{\mu^{-1}_k}{\Vert x-y^k \Vert}^2-\frac{1}{2}L{\mu^{-1}_k}{\Vert x-x^{k+1} \Vert}^2,
\end{align}

Setting $x=x^k$, $d^k=d^{x^k}$, according to $ \widetilde {\mathcal F}^{d^k}(x^{k+1},\mu_k)\ge \widetilde {\mathcal F}(x^{k+1},\mu_k)$, we have
\begin{align}\label{3.15}
 \widetilde  {\mathcal F} (x^{k+1},\mu_k)
&\leq \widetilde  {\mathcal F}^{d^k}(x^k,\mu_k)+
\frac{1}{2}L{\mu^{-1}_k}{\Vert x^k-y^k \Vert}^2-\frac{1}{2}L{\mu^{-1}_k}{\Vert x^k-x^{k+1} \Vert}^2.
\end{align}
Together the Definition \ref{def3.1}(iv) with the above inequality, we deduce
  \begin{align*}
 \widetilde  {\mathcal F} (x^{k+1},\mu_k)+\kappa\mu_k
&\leq \widetilde  {\mathcal F} (x^k,\mu_{k-1})+ \kappa\mu_{k-1}+
\frac{1}{2}L{\mu_k}^{-1}{\Vert x^k-y^k \Vert}^2 \notag\\
&\quad-\frac{1}{2}L{\mu^{-1}_k}{\Vert x^k-x^{k+1} \Vert}^2,
\end{align*}
 Then it has that
 \begin{align*}
&\widetilde {\mathcal F}(x^{k+1},\mu_k)+\kappa\mu_k+ \tau_k {\Vert x^k-x^{k+1} \Vert}^2\notag\\
\leq & \widetilde  {\mathcal F} (x^k,\mu_{k-1})+ \kappa\mu_{k-1}+\tau_{k-1} {\Vert x^{k-1}-x^{k} \Vert}^2\notag\\
\,&+(\tau_k-\frac{1}{2}L{\mu^{-1}_k}){\Vert x^k-x^{k+1} \Vert}^2-(\tau_{k-1}-\frac{1}{2}L{\mu^{-1}_k}\beta^2_{k-1}){\Vert x^k-x^{k-1}\Vert}^2.
\end{align*}
By the definition of $\tau_k$ from (\ref{3.11a}) and $\beta_k\in[0,\sqrt{\frac{\mu_{k+1}}{\mu_{k}}})$, we deduce that
\[ \tau_k-\frac{1}{2}L{\mu^{-1}_k}<0,\quad \tau_{k-1}-\frac{1}{2}L{\mu^{-1}_k}\beta^2_{k-1}>0,   \]
 and then the sequence $\{H (x^{k+1},x^k,\mu_k)+\kappa \mu_k\}$ is a nonincreasing sequence.\\
This completes the proof.\qed
\end{proof}

\begin{corollary}\label{corollary 1}
For any $k\in \mathbb{N}$, $\beta_k\in[0,\sqrt{\frac{\mu_{k+1}}{\mu_{k}}})$,  $\beta_k$ is chosen as (\ref{100}),  then there exists $\zeta\in R$ satisfying
 \[\zeta: =\mathop{\lim_{k\to+\infty}}H (x^{k+1},x^k,\mu_k)+\kappa\mu_{k},    \]
and
\begin{align}\label{3.16}
\mathop{\lim_{k\to+\infty}}H(x^{k+1},x^k, \mu_k)+\kappa\mu_{k}
& =\mathop{\lim_{k\to+\infty}}H(x^{k+1},x^k, \mu_k)\\ \notag
&= \mathop{\lim_{k\to+\infty}}\widetilde{\mathcal F}(x^{k},\mu_{k-1})=\mathop{\lim_{k\to+\infty}} {\mathcal F}(x^{k})=\zeta.
 \end{align}
Moreover, for any $k\in \mathbb{N}$, the sequence $\left\{ x^k\right\}$ is bounded.
\end{corollary}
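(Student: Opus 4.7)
The plan is to combine the monotonicity provided by Lemma \ref{lemma3.3} with a lower bound coming from the nonnegativity and level boundedness of $\mathcal{F}$, then to peel off the three error terms ($\kappa\mu_k$, $\tau_k\|x^{k+1}-x^k\|^2$, and $|\widetilde f-f|$) one by one to pass to the limit, and finally to invoke Assumption \ref{assumption 3} for the boundedness of $\{x^k\}$.

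First I would apply Lemma \ref{lemma3.3} to conclude that $\{H(x^{k+1},x^k,\mu_k)+\kappa\mu_k\}$ is nonincreasing. To obtain convergence I need a uniform lower bound: using property (iv) of Definition \ref{def3.1}, $\widetilde f(x,\mu)\ge f(x)-\kappa\mu$, so $\widetilde{\mathcal F}(x,\mu)+\kappa\mu\ge\mathcal F(x)\ge 0$, which yields
\[
H(x^{k+1},x^k,\mu_k)+\kappa\mu_k\ \ge\ \widetilde{\mathcal F}(x^{k+1},\mu_k)+\kappa\mu_k\ \ge\ \mathcal F(x^{k+1})\ \ge\ 0.
\]
Therefore the monotone sequence converges to some finite $\zeta\ge 0$. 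Since $\mu_k\downarrow 0$ by Lemma \ref{lemma3.2}(ii), the additive $\kappa\mu_k$ vanishes and $\lim_k H(x^{k+1},x^k,\mu_k)=\zeta$ as well.

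The heart of the argument, and the main obstacle, is establishing $\tau_k\|x^{k+1}-x^k\|^2\to 0$; only then can one separate $\widetilde{\mathcal F}(x^{k+1},\mu_k)=H(x^{k+1},x^k,\mu_k)-\tau_k\|x^{k+1}-x^k\|^2$ from $H$. Here the refined extrapolation bound (\ref{100}) is used crucially: with $\tau_k=\tfrac{L}{4\mu_k}+\tfrac{L\beta_k^2}{4\mu_{k+1}}$ one computes
\[
\tfrac{L}{2\mu_k}-\tau_k\ =\ \tfrac{L}{4\mu_k}\Bigl(1-\beta_k^2\tfrac{\mu_k}{\mu_{k+1}}\Bigr)\ \ge\ \tfrac{La\mu_{k+1}}{4\mu_k^2},
\]
and an analogous lower bound holds for $\tau_{k-1}-\tfrac{L\beta_{k-1}^2}{2\mu_k}$. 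Telescoping (\ref{3.11}) against the two-sided boundedness of $H(x^{k+1},x^k,\mu_k)+\kappa\mu_k$ converts these quantitative coefficients into the summability of $\bigl(\tfrac{L}{2\mu_k}-\tau_k\bigr)\|x^{k+1}-x^k\|^2$. Combined with the matching upper bound $\tau_k\le\tfrac{L}{2\mu_k}$ (consequence of $\beta_k^2\le\mu_{k+1}/\mu_k$) and the fact that the update rule in Step 3 keeps the ratios $\mu_k/\mu_{k+1}$ controlled, a coefficient comparison then forces $\tau_k\|x^{k+1}-x^k\|^2\to 0$.

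Once this is in hand, $\widetilde{\mathcal F}(x^{k+1},\mu_k)\to\zeta$ follows directly from the identity above. Property (iv) of Definition \ref{def3.1} gives $|\widetilde{\mathcal F}(x^{k+1},\mu_k)-\mathcal F(x^{k+1})|\le\kappa\mu_k\to 0$, hence $\mathcal F(x^{k+1})\to\zeta$, and the same bound at the shifted index yields $\widetilde{\mathcal F}(x^k,\mu_{k-1})\to\zeta$. Finally, boundedness of $\{x^k\}$ is immediate: the chain $\mathcal F(x^k)\le H(x^k,x^{k-1},\mu_{k-1})+\kappa\mu_{k-1}\le H(x^0,x^{-1},\mu_{-1})+\kappa\mu_{-1}$ traps $\{x^k\}$ inside a uniform sublevel set of $\mathcal{F}$, which is compact by Assumption \ref{assumption 3}.
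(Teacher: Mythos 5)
Your high-level architecture is the same as the paper's (monotonicity from Lemma \ref{lemma3.3} plus a uniform lower bound gives convergence of $H(x^{k+1},x^k,\mu_k)+\kappa\mu_k$ to $\zeta$; then strip off $\kappa\mu_k$, then the quadratic term, then the smoothing error via Definition \ref{def3.1}(iv); boundedness from Assumption \ref{assumption 3} is verbatim the paper's argument, with your $\mathcal F\ge 0$ playing the role of the paper's $\min_{x\in\mathcal X}\mathcal F(x)$). Where you genuinely depart is the central step $\tau_{k-1}\Vert x^k-x^{k-1}\Vert^2\to 0$: the paper does \emph{not} sum anything there, but bounds this term in one step by $\frac{\tau_{k-1}}{\tau_{k-1}-\frac12 L\mu_k^{-1}\beta_{k-1}^2}\le\frac{2-a}{a}$ times the consecutive difference $\bigl(H(x^{k},x^{k-1},\mu_{k-1})+\kappa\mu_{k-1}\bigr)-\bigl(H(x^{k+1},x^{k},\mu_{k})+\kappa\mu_{k}\bigr)$, which vanishes because the monotone sequence converges. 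You instead telescope (\ref{3.11}) to summability (in effect re-deriving Corollary \ref{corollary 2}, which in the paper comes \emph{after} this corollary) and then attempt a coefficient comparison.

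That comparison is where your proof has a genuine gap. Your lower bound $\frac{L}{2\mu_k}-\tau_k=\frac{L}{4\mu_k}\bigl(1-\beta_k^2\frac{\mu_k}{\mu_{k+1}}\bigr)\ge\frac{La\mu_{k+1}}{4\mu_k^2}$ (the literal reading of (\ref{100})) gives summability of $\frac{\mu_{k+1}}{\mu_k^2}\Vert x^{k+1}-x^k\Vert^2$, and to upgrade this to $\mu_k^{-1}\Vert x^{k+1}-x^k\Vert^2\to 0$ you multiply by $\mu_k/\mu_{k+1}$, asserting that ``the update rule in Step 3 keeps the ratios $\mu_k/\mu_{k+1}$ controlled.'' Nothing in the algorithm guarantees this: $\mu$ stays constant between consecutive indices of $\mathcal N^s$ for an uncontrolled number of iterations (criterion (\ref{3.9}) can hold over arbitrarily long stretches as $\mu_k\to 0$), so at a reset step $k\in\mathcal N^s$ one has $\mu_k/\mu_{k+1}=\bigl((k+1)/(j+1)\bigr)^\sigma$ with $j$ the previous reset index, and this ratio is not a priori bounded. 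The repair is the stronger working form of (\ref{100}) that the paper itself invokes throughout its own computations, namely $\beta_k^2\frac{\mu_k}{\mu_{k+1}}\le 1-a$ (see the estimates used in the proofs of this corollary, Corollary \ref{corollary 2} and Theorem \ref{thm1}): under it $\frac{L}{2\mu_k}-\tau_k\ge\frac{La}{4\mu_k}$, your telescoping yields $\sum_k\mu_k^{-1}\Vert x^{k+1}-x^k\Vert^2<\infty$, and then $\tau_k\Vert x^{k+1}-x^k\Vert^2\le\frac{L}{2\mu_k}\Vert x^{k+1}-x^k\Vert^2\to 0$ with no ratio control needed. (To be fair, the paper's own $\frac{2-a}{a}$ bound also silently uses this stronger form of (\ref{100}); under the literal form its denominator $1-\beta_{k-1}^2\frac{\mu_{k-1}}{\mu_k}$ is only bounded below by $a\mu_k/\mu_{k-1}$, with the same unbounded-ratio problem.) So your route is salvageable and, once repaired, slightly more informative than the paper's since it produces the summability statements en route; but as written, the step you yourself identify as ``the heart of the argument'' rests on an unjustified, and in general false, boundedness claim.
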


\begin{proof}
  From (\ref{3.11a}), Definition \ref{def3.1}(iv) and Lemma \ref{lemma2.5}, we have
   \begin{align*}
   {H(x^{k+1},x^k, \mu_k)+\kappa \mu_k }\ge \widetilde {\mathcal{F}}(x^{k+1}, \mu_k)+\kappa \mu_k \ge  {\mathcal{F}}(x^{k+1}) &\ge\mathop{min_{x\in\mathcal{X}}}\mathcal{F}(x) \notag\\
   &=\mathop{min_{x\in\mathcal{X}}}\mathcal{F}_{l_0}(x).
   \end{align*}
   Together the lower bounded of the sequence $\left\{H(x^{k+1},x^k ,\mu_k)+\kappa \mu_k \right\}$ with its nonincreasing, then it is convergent and there exists $\zeta\in R$ satisfying
 \[\zeta: =\mathop{\lim_{k\to+\infty}}H (x^{k+1},x^k ,\mu_k)+\kappa\mu_{k}.   \]
     By virtue of $\mathop{\lim_{k\to+\infty} \mu_k}=0$ and  Definition \ref{def3.1}(i), we have
 \begin{align} \label{3.16a}
 \mathop{\lim_{k\to+\infty}}H(x^{k+1},x^k ,\mu_k)+\kappa\mu_{k}
=\mathop{\lim_{k\to+\infty}}H(x^{k+1},x^k ,\mu_k)=\zeta,
 \end{align}
 Next, we are ready to prove $\mathop{\lim_{k\to+\infty}}\widetilde{\mathcal F}(x^{k},\mu_{k-1})=\zeta.$  From (\ref{3.11}),  we deduce
 \begin{align}\label{3.19a}
  &\vert \widetilde {\mathcal F}(x^{k},\mu_{k-1})-\zeta\vert\notag\\
=&\vert H (x^{k},x^{k-1}, \mu_{k-1})-\zeta
 -\tau_{k-1} {\Vert x^k-x^{k-1} \Vert}^2 \vert\notag\\
\leq &\vert H(x^{k},x^{k-1} ,\mu_{k-1})-\zeta\vert
 + \frac{\tau_{k-1}}{\tau_{k-1}-\frac{1}{2}L{\mu^{-1}_k}\beta^2_{k-1}}( H (x^{k},x^{k-1}, \mu_{k-1})\notag\\
\quad &+\kappa \mu_{k-1}-(H(x^{k+1},x^{k},\mu_{k})+\kappa\mu_{k}))\notag\\
  \leq &\vert H(x^{k},x^{k-1}, \mu_{k-1})-\zeta\vert +\frac{1+\beta^2_{k-1}\frac{\mu_{k-1}}{\mu_k}}{1-\beta^2_{k-1}\frac{\mu_{k-1}}{\mu_k}} (H(x^{k},x^{k-1},\mu_{k-1})-\zeta)\notag\\
  \leq & \vert H(x^{k},x^{k-1}, \mu_{k-1})-\zeta\vert + \frac{2-a}{a}(H(x^{k},x^{k-1},\mu_{k-1})-\zeta),
 \end{align}
where the first inequality uses the triangle inequality and lemma \ref{lemma3.3}, the second one derives from the nonincreasing of $\left\{H (x^{k+1},x^k ,\mu_k)+\kappa \mu_k \right\}$ and
 $ H(x^{k},x^{k-1}, \\
 \mu_{k-1})+\kappa \mu_{k-1} \ge\zeta$, the last one is due to
$\beta^2_{k-1}{\frac{\mu_{k-1}}{\mu_{k}}}\leq 1-a\frac{\mu_{k-1}}{\mu_{k}}\leq 1-a$ and $\frac{1}{1-\beta^2_{k-1}\frac{\mu_{k-1}}{\mu_k}}\leq\frac{1}{a}$.
When $k$ is sufficiently large, the right of the inequality (\ref{3.19a}) tends to 0, thus
\[
 \mathop{\lim_{k\to+\infty}} \widetilde{\mathcal F} (x^{k},\mu_{k-1})=\zeta.
\]
and  $\mathop{\lim_{k\to+\infty}} \widetilde{\mathcal F}(x^k)=\mathop{\lim_{k\to+\infty}} \widetilde{\mathcal F} (x^{k},\mu_{k-1})=\zeta$ from Definition \ref{def3.1}(i).
Using the nonincreasing property of the sequence
 $\left\{ H(x^{k+1},x^k ,\mu_k)+\kappa\mu_{k}  \right\}$, we have
\begin{align*}
   {\mathcal F}(x^{k+1}) \leq \widetilde {\mathcal F}(x^{k+1}, \mu_k)+\kappa \mu_k \leq {H (x^{k+1},x^k,\mu_k)+\kappa \mu_k}&\leq H (x^{1},x^0,\mu_0)+\kappa \mu_0 \notag\\
    &<+\infty.
    \end{align*}
Since ${\mathcal F}$  is level bounded on $\mathcal{X}$ according to the Assumption \ref{assumption 3}, we easily have that the sequence $\left\{ x^k\right\}$ is bounded for any $k\in \mathbb{N}$.\qed
\end{proof}
\begin{corollary}\label{corollary 2}
 When  $\beta_k$ is chosen as (\ref{100}),  it holds that
 \[ \sum_{k=0}^{+\infty}  {\Vert x^{k+1}-x^k\Vert}^2< +\infty,\]  
 and
 \[\sum_{k=0}^{+\infty} \mu^{-1}_{k} {\Vert x^{k+1}-x^k\Vert}^2<+\infty,\quad
 \sum_{k=0}^{+\infty} \mu^{-1}_{k+1} {\Vert x^{k+1}-x^k\Vert}^2<+\infty.   \]
   
 \end{corollary}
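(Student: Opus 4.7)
The strategy is to telescope the descent inequality from Lemma~\ref{lemma3.3} and exploit the sharper extrapolation bound (\ref{100}) to extract a quantitative decrease.

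Setting $D_k := \|x^{k+1}-x^k\|^2$ and $E_k := H(x^{k+1},x^k,\mu_k)+\kappa\mu_k$, the inequality (\ref{3.11}) reads $E_k - E_{k-1} \leq A_k D_k - B_{k-1} D_{k-1}$ with $A_k := \tau_k - L/(2\mu_k)$ and $B_{k-1} := \tau_{k-1} - L\beta_{k-1}^2/(2\mu_k)$. Plugging in $\tau_k = L/(4\mu_k)+L\beta_k^2/(4\mu_{k+1})$ and rewriting (\ref{100}) as $\beta_k^2/\mu_{k+1} \leq 1/\mu_k - a\mu_{k+1}/\mu_k^2$, a direct calculation yields the explicit estimates
\[
A_k \leq -\frac{La\mu_{k+1}}{4\mu_k^2} \leq 0, \qquad B_{k-1} \geq \frac{La\mu_k}{4\mu_{k-1}^2} \geq 0.
\]

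Next, I would move $(-A_k)D_k$ and $B_{k-1}D_{k-1}$ (both nonnegative) to the left-hand side of (\ref{3.11}) and telescope over $k=1,\dots,N$ to obtain
\[
\sum_{k=1}^{N}(-A_k)D_k + \sum_{k=1}^{N}B_{k-1}D_{k-1} \leq E_0 - E_N.
\]
Corollary~\ref{corollary 1} guarantees $E_N \to \zeta \in \mathbb{R}$, so letting $N \to \infty$ shows both series converge. Substituting the explicit lower bounds delivers, in particular, $\sum_k (\mu_{k+1}/\mu_k^2)\|x^{k+1}-x^k\|^2 < \infty$.

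To derive the three stated summability claims, I would partition $\mathbb{N}$ into the jump set $\mathcal{N}^s$ and its complement. On $\bar{\mathcal{N}^s}$ one has $\mu_{k+1}=\mu_k$, so the weight $\mu_{k+1}/\mu_k^2$ coincides with $\mu_k^{-1}=\mu_{k+1}^{-1}$ and the contribution of these indices to both weighted series is immediately controlled. For $k\in\mathcal{N}^s$ I would combine (i) the failure of the test (\ref{3.9}), which yields $E_{k-1}-E_k < \alpha\mu_k^2$, (ii) the explicit update (\ref{3.10}), and (iii) the boundedness of $\{x^k\}$ from Corollary~\ref{corollary 1} to control the individual jump terms. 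The unweighted summability $\sum_k\|x^{k+1}-x^k\|^2<\infty$ is then immediate from either weighted series together with the trivial bound $\mu_k^{-1}\geq\mu_0^{-1}$.

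The main obstacle I expect is precisely the jump-index contribution to $\sum\mu_{k+1}^{-1}\|x^{k+1}-x^k\|^2$: when $\mu_{k+1}\ll\mu_k$ the weight $\mu_{k+1}^{-1}$ dominates $\mu_{k+1}/\mu_k^2$ by a factor of $(\mu_k/\mu_{k+1})^2$, so the telescoped inequality alone is insufficient and the explicit schedule in (\ref{3.10}) combined with the structural information about $\mathcal{N}^s$ from Lemma~\ref{lemma3.2} must be used to close the argument.
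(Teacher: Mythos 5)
Your telescoping skeleton coincides with the paper's proof: sum (\ref{3.11}) from $k=0$ to $K$, bound the telescoped difference by $H(x^{0},x^{-1},\mu_{-1})+\kappa\mu_{-1}-\zeta$ using Corollary \ref{corollary 1}, and read off weighted summability from the sign information on the coefficients; moreover your estimates $A_k\le -\frac{La\mu_{k+1}}{4\mu_k^2}$ and $B_{k-1}\ge \frac{La\mu_k}{4\mu_{k-1}^2}$ are the correct consequences of (\ref{100}) read literally. The gap is exactly where you flag it, and the tools you list cannot close it. At a jump index $k\in\mathcal{N}^s$, the failure of (\ref{3.9}) combined with (\ref{3.11}) gives $(-A_k)\Vert x^{k+1}-x^k\Vert^2\le\alpha\mu_k^2$, hence at best
\begin{equation*}
\mu_{k+1}^{-1}\Vert x^{k+1}-x^k\Vert^2\le\frac{4\alpha}{La}\,\frac{\mu_k^4}{\mu_{k+1}^2}.
\end{equation*}
With $\mu_{k+1}=\mu_0(k+1)^{-\sigma}$ from (\ref{3.10}), this per-term bound is of order $(k+1)^{2\sigma}(\mu_k/\mu_0)^4$; even in the most favorable scenario of consecutive jumps, where $\mu_k=\mu_0k^{-\sigma}$, it behaves like $k^{-2\sigma}$, which is summable only for $\sigma>1/2$, while the corollary is asserted for all $\sigma\in(0,1)$. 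Boundedness of $\{x^k\}$ does not help: $\mathcal{N}^s$ is infinite by Lemma \ref{lemma3.2}(ii) and the weights $\mu_{k+1}^{-1}$ blow up. So the two weighted claims are not established by your plan, and since your unweighted claim is deduced from them (the weight $\mu_{k+1}/\mu_k^2$ you actually control is only bounded below by $\mu_0^{-1}(k+1)^{-\sigma}\to 0$), nothing in the corollary is fully proved along this route.

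The source of the trouble is a mismatch between (\ref{100}) as displayed and the inequality the paper actually invokes: in the proofs of Corollary \ref{corollary 1}, Corollary \ref{corollary 2} and Theorem \ref{thm1} the condition is used in the inverted-ratio form $\beta_k^2\frac{\mu_k}{\mu_{k+1}}\le 1-a\frac{\mu_k}{\mu_{k+1}}$, i.e.\ $\beta_k^2\le\frac{\mu_{k+1}}{\mu_k}-a$, which is strictly stronger than the literal (\ref{100}) whenever $\mu_{k+1}<\mu_k$. Under that form one obtains the pointwise bounds $\mu_k^{-1}-\beta_k^2\mu_{k+1}^{-1}\ge a\mu_{k+1}^{-1}\ge a\mu_k^{-1}$ and $\mu_{k+1}\mu_k^{-1}-\beta_k^2\ge a$, so all three series are controlled at once by the single telescoped constant $\frac{4}{La}\bigl(H(x^{0},x^{-1},\mu_{-1})+\kappa\mu_{-1}-\zeta\bigr)$, with no jump/non-jump dichotomy whatsoever --- that is the paper's entire proof. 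Conversely, under the literal (\ref{100}) the paper's own line ``$\mu_{k+1}\mu_k^{-1}-\beta_k^2\ge a$'' is false when $\mu_{k+1}<\mu_k$ (your $a\mu_{k+1}^2/\mu_k^2$ is what one actually gets, and it is sharp). The correct repair is therefore not a cleverer treatment of $\mathcal{N}^s$ but adopting the inverted-ratio hypothesis on $\beta_k$, which is evidently what the authors intend; with it, your telescoping argument terminates immediately.
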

\begin{proof}
 Summing the inequality (\ref{3.11}) from $k=0$ to $k=K$, we get
 \begin{align*}
&\sum_{k=0}^{k=K}(-\tau_k+\frac{1}{2}L\mu^{-1}_k){\Vert x^{k+1}-x^k\Vert}^2 \notag\\
 \leq & H (x^{0},x^{-1} ,\mu_{-1})
-H (x^{K+1},x^K,\mu_K)+\kappa(\mu_{-1}-\mu_K),
\end{align*}
that is
 \begin{align*}
&\sum_{k=0}^{k=K}\frac{L}{4}(\mu^{-1}_{k}-\beta^2_k\mu^{-1}_{k+1}){\Vert x^{k+1}-x^{k}\Vert}^2 \notag\\ =&\sum_{k=0}^{k=K}\frac{L}{4}(\mu_{k+1}\mu^{-1}_k-\beta^2_k)\mu^{-1}_{k+1}{\Vert x^{k+1}-x^{k}\Vert}^2  \notag\\
 \leq& H (x^{0},x^{-1} ,\mu_{-1})
-H (x^{K+1},x^K,\mu_K)+\kappa(\mu_{-1}-\mu_K).
\end{align*}
Since $\beta^2_k\frac{\mu_k}{\mu_{k+1}}\leq1-a\frac{\mu_k}{\mu_{k+1}}$, $\mu_k$ is nonincreasing and $\mu_k\in(0,1)$ for any $k\in\mathbb{N}$,
it holds that
\[ \beta^2_k\frac{\mu_k}{\mu_{k+1}}\leq1-a,
\]
thus,
\[ \mu^{-1}_k-\beta^2_k\mu^{-1}_{k+1}\ge a,\quad\mu_{k+1}\mu^{-1}_k-\beta^2_k\ge a.
\]
When $K$ is large enough, since $\mu_k$ is nonincreasing, it holds that
\begin{align*}
\sum_{k=0}^{+\infty}{\Vert x^{k+1}-x^{k}\Vert}^2 
&\leq  H (x^{0},x^{-1} ,\mu_{-1})+\kappa\mu_{-1}-\zeta  <+\infty,
\end{align*}
\begin{align*}
\sum_{k=0}^{+\infty}\mu^{-1}_{k}{\Vert x^{k+1}-x^{k}\Vert}^2 
&\leq\sum_{k=0}^{+\infty}\mu^{-1}_{k+1}{\Vert x^{k+1}-x^{k}\Vert}^2 \notag\\
&\leq  H (x^{0},x^{-1} ,\mu_{-1})+\kappa\mu_{-1}-\zeta  <+\infty.
\end{align*}
The proof is completed .\qed
\end{proof}
Next, the following theorem is given to illustrate the convergence of the subsequence.
 \begin{theorem}\label{thm1}
 Suppose $\beta_k$ is chosen as (\ref{100}), any accumulation point of $ \left\{x^k\!:\! k\in\mathcal{N}^s \right\}$ is a lifted stationary point of (\ref{1.2}).
  \end{theorem}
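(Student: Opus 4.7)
The plan is to fix an accumulation point $\bar x$ of $\{x^k:k\in\mathcal N^s\}$, pass to a subsequence $x^{k_j}\to\bar x$ with $k_j\in\mathcal N^s$, and then take the limit in the first-order condition for the subproblem (\ref{3.7a}) to produce an index vector $d^\star\in\prod_{i=1}^{n}\mathcal D(\bar x_i)$ realising (\ref{2.2}) at $\bar x$. By Corollary \ref{corollary 2} one has $\|x^{k+1}-x^k\|\to 0$, and (\ref{100}) forces $\sup_k\beta_k\le\sqrt{1-a}<1$, so $y^{k_j}\to\bar x$ and $x^{k_j+1}\to\bar x$ as well. Since $\mathbb D^n$ is finite, I refine once more to a subsequence on which $d^{k_j}$ is a fixed vector $d^\star$; a coordinate-by-coordinate inspection of (\ref{2.3}) shows $d^\star\in\prod_i\mathcal D(\bar x_i)$, the only non-automatic case being $|\bar x_i|=v$, at which both labels allowed by (\ref{2.3}) already lie in $\mathcal D(\bar x_i)$.

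Strong convexity of $Q_{d^{k_j}}(\cdot,y^{k_j},\mu_{k_j})$ together with (\ref{3.7a}) yields the optimality inclusion
\begin{equation*}
\lambda\sum_{i=1}^{n}\theta'_{d^\star_i}(x^{k_j+1}_i)e_i\in\nabla\widetilde f(y^{k_j},\mu_{k_j})+L\mu_{k_j}^{-1}(x^{k_j+1}-y^{k_j})+\frac{\lambda}{v}\partial\Bigl(\sum_{i=1}^{n}|x^{k_j+1}_i|\Bigr)+N_{\mathcal X}(x^{k_j+1}),
\end{equation*}
which I intend to push to the limit term by term. The left-hand side is in fact constant along $k_j$, since each $\theta_i$ is affine and hence $\theta'_{d^\star_i}$ does not depend on its argument. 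By Definition \ref{def3.1}(iii), together with the uniform boundedness of $\nabla\widetilde f(\cdot,\mu)$ inherited from the Lipschitz constant of $f$, every cluster point of $\nabla\widetilde f(y^{k_j},\mu_{k_j})$ belongs to $\partial f(\bar x)$. Outer semicontinuity and local boundedness of the convex subdifferential $x\mapsto\partial(\sum_i|x_i|)$ and of $N_{\mathcal X}(\cdot)$ likewise carry bounded multiplier sequences to elements of $\partial(\sum_i|\bar x_i|)$ and $N_{\mathcal X}(\bar x)$ respectively.

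The main obstacle, and the only step where the hypothesis $k_j\in\mathcal N^s$ is essential, is to show that the residual $L\mu_{k_j}^{-1}(x^{k_j+1}-y^{k_j})$ vanishes along the subsequence; the bound $\|x^{k+1}-x^k\|=o(\sqrt{\mu_k})$ coming from Corollary \ref{corollary 2} alone is too weak for this. Here I plan to combine two ingredients. First, $k_j\in\mathcal N^s$ certifies that (\ref{3.9}) fails at $k=k_j$, so the descent $H(x^{k_j+1},x^{k_j},\mu_{k_j})+\kappa\mu_{k_j}-(H(x^{k_j},x^{k_j-1},\mu_{k_j-1})+\kappa\mu_{k_j-1})$ is strictly greater than $-\alpha\mu_{k_j}^2$. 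Second, substituting the full constraint (\ref{100}) into the definition of $\tau_k$ sharpens Lemma \ref{lemma3.3} into the upper estimate
\begin{equation*}
H(x^{k+1},x^k,\mu_k)+\kappa\mu_k-\bigl(H(x^k,x^{k-1},\mu_{k-1})+\kappa\mu_{k-1}\bigr)\le-\frac{aL\mu_{k+1}}{4\mu_k^2}\|x^{k+1}-x^k\|^2-\frac{aL\mu_k}{4\mu_{k-1}^2}\|x^k-x^{k-1}\|^2
\end{equation*}
on the same quantity. Chaining these produces bounds of the form $\|x^{k_j+1}-x^{k_j}\|^2\le C\mu_{k_j}^4/\mu_{k_j+1}$ and $\|x^{k_j}-x^{k_j-1}\|^2\le C\mu_{k_j}\mu_{k_j-1}^2$ along $\mathcal N^s$. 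Combined with the identity $y^{k_j}-x^{k_j+1}=(x^{k_j}-x^{k_j+1})+\beta_{k_j-1}(x^{k_j}-x^{k_j-1})$, the bound $\beta_{k_j-1}\le\sqrt{1-a}$, and the observation that consecutive elements of $\mathcal N^s$ cannot be too distant---a long intervening stretch on which (\ref{3.9}) holds would produce a per-step descent of at least $\alpha\mu^2$ and force the bounded-below sequence $\{H+\kappa\mu\}$ below its infimum---these quantitative estimates yield $L\mu_{k_j}^{-1}\|x^{k_j+1}-y^{k_j}\|\to 0$. Passing to the limit in the displayed optimality inclusion then delivers (\ref{2.2}) at $\bar x$ with multiplier $d^\star\in\prod_i\mathcal D(\bar x_i)$, so $\bar x$ is a lifted stationary point by Definition \ref{def2.1}.
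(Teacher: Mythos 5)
Your proposal is correct, and its skeleton coincides with the paper's: boundedness from Corollary \ref{corollary 1}, a subsequence in $\mathcal{N}^s$ along which (\ref{3.9}) fails, the first-order optimality condition of the subproblem (\ref{3.7a}), a refinement using the finiteness of $\mathbb{D}^n$ to freeze $d^{k_j}=d^\star\in\prod_i\mathcal{D}(\bar x_i)$ (your coordinate check of (\ref{2.3}), with $|\bar x_i|=v$ as the only boundary case, is the detail the paper merely asserts), and passage to the limit via Definition \ref{def3.1}(iii) and upper semicontinuity of the subdifferentials. Where you genuinely diverge is the critical residual step $\mu_{k_j}^{-1}(x^{k_j+1}-y^{k_j})\to 0$. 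The paper drops the backward-difference term of (\ref{3.11}), deduces $\mu_{k_i}^{-1}\Vert x^{k_i+1}-x^{k_i}\Vert\to 0$, and then asserts (\ref{3.18a}), even though $x^{k_i+1}-y^{k_i}$ also contains $\beta_{k_i-1}(x^{k_i}-x^{k_i-1})$ whose rescaled norm $\mu_{k_i}^{-1}\Vert x^{k_i}-x^{k_i-1}\Vert$ is never controlled; moreover its coefficient bound $1-\beta_{k_i}^2\mu_{k_i}/\mu_{k_i+1}\ge a$ does not follow from (\ref{100}) as literally stated, which only yields $1-\beta_k^2\mu_k/\mu_{k+1}\ge a\,\mu_{k+1}/\mu_k$. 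You instead keep both squared-difference terms with the coefficients $\frac{aL\mu_{k+1}}{4\mu_k^2}$ and $\frac{aL\mu_k}{4\mu_{k-1}^2}$, which is exactly what (\ref{100}) delivers, obtain $\Vert x^{k_j+1}-x^{k_j}\Vert^2\le C\mu_{k_j}^4/\mu_{k_j+1}$ and $\Vert x^{k_j}-x^{k_j-1}\Vert^2\le C\mu_{k_j}\mu_{k_j-1}^2$ from the failure of (\ref{3.9}), and supply the missing ratio control ($\mu_{k_j}^2/\mu_{k_j+1}\to 0$ and $\mu_{k_j-1}^2/\mu_{k_j}\to 0$) via the gap bound on $\mathcal{N}^s$: between updates the nonincreasing, bounded-below sequence $\{H+\kappa\mu\}$ decreases by at least $\alpha\mu^2$ per step at the frozen value $\mu=\mu_0/(m+1)^\sigma$, so the gap after $m$ is $O((m+1)^{2\sigma})$, and since $\sigma\in(0,1)$ both ratios indeed vanish (for instance $\mu_{k_j}^2/\mu_{k_j+1}\le \mu_0(m+2+c(m+1)^{2\sigma})^\sigma/(m+1)^{2\sigma}\to 0$ because $2\sigma^2<2\sigma$). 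This costs you an extra combinatorial lemma but buys an argument that works with (\ref{100}) verbatim and closes the backward-difference gap the paper glosses over; the paper's version is shorter but, as written, incomplete at precisely this point. One pedantic caveat for the write-up: the implication ``$j\notin\mathcal{N}^s$ implies (\ref{3.9}) held at $j$'' can fail at $j=0$, where (\ref{3.10}) coincidentally returns $\mu_1=\mu_0$, so state the gap bound for indices $j\ge 1$; this does not affect the asymptotics.
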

\begin{proof}

  According to Corollary \ref{corollary 1}, it follows that $\left\{ x^k: k\in\mathcal{N}^s \right\}$ is bounded, so it  has an accumulation point which might as well denote as $\overline x$, i.e., there exists a subsequence $\left\{ x^{k_i} \right\}_{k_i\in\mathcal{N}^s}$ of $\left\{ x^k \right\}_{k\in\mathcal{N}^s}$ such that $x^{k_i}\to \overline x$ as $i\to +\infty$.
 For any $k\in\mathcal{N}^s$, the inequality (\ref{3.9}) does not hold. Together this with (\ref{3.11}), we get
$ (-\tau_{k_i}+\frac{1}{2}L\mu^{-1}_{k_i}){\Vert x^{k_i+1}-x^{k_i}\Vert}^2\leq \alpha{\mu^2_{k_i}} ,$ that is,
\[
 \frac{L}{4}(1-\beta^2_{k_i}\frac{\mu_{k_i}}{\mu_{k_i+1}})\mu^{-1}_{k_i}\Vert x^{k_i+1}-x^{k_i}\Vert^2\leq \alpha{\mu^2_{k_i}},
\]
\[
\frac{L}{4}(\mu^{-1}_{k_i}{\mu_{k_i+1}}-\beta^2_{k_i})\mu^{-1}_{k_i+1}\Vert x^{k_i+1}-x^{k_i}\Vert^2\leq \alpha{\mu^2_{k_i}}.
\]
Multiplying $\mu^{-1}_{k_i}$ and $\mu^{-1}_{k_i+1}$ on both sides of the above inequalities respectively, it holds that
\begin{align}
\frac{L}{4}(1-\beta^2_{k_i}\frac{\mu_{k_i}}{\mu_{k_i+1}})\mu^{-2}_{k_i}\Vert x^{k_i+1}-x^{k_i}\Vert^2\leq \alpha{\mu_{k_i}},\label{3.188}\\
\frac{L}{4}(\mu^{-1}_{k_i}{\mu_{k_i+1}}-\beta^2_{k_i})\mu^{-2}_{k_i+1}\Vert x^{k_i+1}-x^{k_i}\Vert^2\leq \alpha{\mu^2_{k_i}}\mu^{-1}_{k_i+1}.\label{3.189}
\end{align}
 Since  $\beta^2_{k_i}{\frac{\mu_{k_i}}{\mu_{k_i+1}}}\leq 1-a\frac{\mu_{k_i}}{\mu_{k_i+1}}$, $\mu_{k_i}$ is nonincreaing and $\mu_{k_i}\in(0,1)$,  it holds that  
  $\beta^2_{k_i}{\frac{\mu_{k_i}}{\mu_{k_i+1}}}\leq 1-a\leq 1-a\mu_{k_i}$.Then
  \[  1-\beta^2_{k_i}\frac{\mu_{k_i}}{\mu_{k_i+1}}\ge a,\quad \mu^{-1}_{k_i}{\mu_{k_i+1}}-\beta^2_{k_i}\ge a \]
   
   Moreover,  $\lim_{i\to +\infty} \mu_{k_i}=0$.   
 Taking  limits on (\ref{3.188}), (\ref{3.189}), the right hand of them are tends to $0$, then we get   
\begin{align*}
\lim_{i\to +\infty} \mu^{-1}_{k_i} \Vert x^{k_i+1}-x^{k_i}\Vert =0,\quad
\lim_{i\to +\infty} \mu^{-1}_{k_{i+1}} \Vert x^{k_i+1}-x^{k_i}\Vert =0,
\end{align*}
and then
\begin{align}\label{3.18a}
\lim_{i\to+\infty}\mu^{-1}_{k_i}(x^{k_i+1}-y^{k_i})
=\lim_{i\to+\infty}\mu^{-1}_{k_i}((
x^{k_i+1}-x^{k_i})-\beta_{k_i}(
x^{k_i}-x^{k_i-1}))=
0. 
\end{align}
By the first order necessary optimality condition of (\ref{3.7a}), we have
   \begin{align}\label{3.18}
\langle \nabla\widetilde f(x^{k_i}, \mu_{k_i})+L\mu^{-1}_{k_i}(x^{k_i+1}-y^{k_i})+\lambda\xi^{k_i}, x-x^{k_i+1}\rangle\ge 0, 
\end{align}
where $\xi^{k_i}\!\in\!\partial\Phi^{d^{k_i}}(x^{k_i+1}), x\in\mathcal{X}.$
Due to $\lim_{i\to+\infty}x^{k_i}=\overline x$,  $\lim_{i\to+\infty} \Vert x^{k_{i}+1}-x^{k_i} \Vert=0$,  then $\lim_{i\to+\infty}x^{k_i+1}=\overline x $. Since
the finiteness of the set $ \left\{d^{k_i}: i\in\mathbb{N}\right\}$,  there is a subsequence $ \left\{k_{i_j} \right\}$
of $ \left\{ {k_i} \right\}$ and 
 $\overline{d}\in \mathcal{D}(\overline x)$ such that $d^{k_{i_j}}=\overline d$,  $\forall j\,\in\mathbb{N}$.
According to the upper semicontinuity of $\partial\Phi^{\overline d}$ and  $\lim_{j\to+\infty}x^{k_{i_j}}=\lim_{j\to+\infty}y^{k_{i_j}}=\overline x $, we obtain
\begin{align}\label{3.19}
\left\{ \lim_{j\to+\infty}\xi^{k_{i_j}}: \xi^{k_{i_j}}\in\partial\Phi^{d^{k_{i_j}}}(x^{k_{i_j}+1}) \right\}\subseteq \partial\Phi^{\overline d}(\overline x).
\end{align}
Combining (\ref{3.18a}), (\ref{3.18}), (\ref{3.19}) with the Definition \ref{def3.1}(iii), letting $k_i= k_{i_j}$
in (\ref{3.18}) and  $j\to+\infty$, it deduces that there exist $\overline \psi\in \partial f(\overline x)$ and
$\overline \xi^{\overline d} \in\partial\Phi^{\overline d}(\overline x)$ satisfying the following
\begin{align}\label{3.20}
 \langle \overline \psi+\lambda\overline \xi^{\overline d},  x-\overline x\rangle \ge0, \,\forall\, x\in\mathcal{X}. 
\end{align}    
Thus, we have that $\overline x$ is a lifted stationary point of (\ref{1.2}) by $\overline d\in \mathcal{D}(\overline x)$, the definition of $\Phi^{\overline d}$ and the convexity of $\mathcal{X}$.
Then the proof is completed.\qed
\end{proof}
 	\begin{theorem}\label{lemma3.4}
 	Suppose  $\beta_k$ is chosen as in (\ref{100}), it has that:
 	\begin{enumerate}[(i)]
 		\rm\item for any $k\in \mathbb {N}$, $\mathcal{A}(x^{k+1})\subseteq \mathcal{A}(x^{k})\cup\mathcal{A}(x^{k-1}), $\\
 		where $ \mathcal{A}(x) =\left\{ i=\{1,2,\cdots,n\}:x_i\ne0 \right\};$
 		\rm\item  $\Vert\nabla\widetilde f(y^k,\mu_{k-1})\Vert <\frac{1}{2}(\lambda/v+L_f)$ when $k$ is sufficiently large;
 		\rm\item for any $k\in\mathbb N$,
 		there exists $K\in\mathbb{N}$ such that for all $k\ge K$
 		\hspace*{1em} \begin{enumerate}
	\item   $k\in\mathcal{M}_{1}$ or   $k\in\mathcal{M}_{2}$ or $k\in\mathcal{M}_{3}$;\\
 	where $\mathcal{M}_{1}=\{k:for\,each\, i=1,\cdots,n,\,there\, exist\, two\, consecutive\,\\ sequence \,values\, have \,values\, of\, 0\};$\\
 $\mathcal{M}_{2}=\{k: \Vert x^{k+1}-x^k\Vert+  \Vert x^{k}-x^{k-1}\Vert\ge C\mu_k\};$\\
 $\mathcal{M}_{3}=\{k: \Vert x^{k+2}-x^{k+1}\Vert+  \Vert x^{k+1}-x^{k}\Vert\ge C\mu_{k+1}\}.$\\
 where $C= \min\{\frac{L^{-1}}{{2c}},\frac{3}{4} L^{-1}(\lambda/v-L_f) \}.$
 \hspace*{1em}   \item  
 $\sum_{k}\Vert x^{k+1}-x^k\Vert<+\infty$, and we have $x^k\to x^*$ as $k\to+\infty$,
 where $x^*$ is a lifted stationary  point of (\ref{1.2}).
 \end{enumerate} 		
 	\end{enumerate}
 \end{theorem}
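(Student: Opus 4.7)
The plan is to prove the three parts in the natural order (ii) $\to$ (i) $\to$ (iii)(a) $\to$ (iii)(b), since each later part builds on the earlier ones.

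First, for part (ii), I would start from the Lipschitz continuity of $f$ (with constant $L_f$), which yields $\|\zeta\| \le L_f$ for every $\zeta \in \partial f(x)$, $x \in \mathcal{X}$. Corollary~\ref{corollary 1} tells us $\{x^k\}$ is bounded, and since $\beta_{k-1}$ is bounded by $\sqrt{1-a}$ and $\|x^k - x^{k-1}\| \to 0$ by Corollary~\ref{corollary 2}, the extrapolated point $y^k$ remains in a bounded neighbourhood of $\mathcal{X}$. Combining $\mu_{k-1}\downarrow 0$ from Lemma~\ref{lemma3.2}(ii) with Definition~\ref{def3.1}(iii), any accumulation point of $\nabla\widetilde{f}(y^k,\mu_{k-1})$ lies in $\partial f(\overline{x})$ for some $\overline{x}\in\mathcal{X}$, hence has norm at most $L_f < \frac{1}{2}(\lambda/v + L_f)$ by Assumption~\ref{assumption 2}. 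A standard subsequence-contradiction argument then upgrades this to a uniform bound for all sufficiently large $k$.

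For part (i), suppose $i \notin \mathcal{A}(x^k) \cup \mathcal{A}(x^{k-1})$, so $x^k_i = x^{k-1}_i = 0$; then $y^k_i = 0$ and $d^k_i = 1$ by Lemma~\ref{lemma2.2}. The explicit proximal formula (\ref{3.6}) with $\tau = \lambda L^{-1}\mu_k$ gives $\widetilde{w}^k_i = w^k_i = -L^{-1}\mu_k\,\nabla_i\widetilde{f}(y^k,\mu_k)$. By part (ii), $|\nabla_i\widetilde{f}(y^k,\mu_k)| < \lambda/v$ for large $k$, so $|\widetilde{w}^k_i| < \lambda L^{-1}\mu_k/v = \tau/v$, which forces $h^k_i = 0$ and hence $x^{k+1}_i = 0$, giving $i \notin \mathcal{A}(x^{k+1})$.

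For (iii)(a), I would argue by contradiction: if $k \notin \mathcal{M}_2\cup\mathcal{M}_3$ for $k \ge K$, then both $\|x^{k+1}-x^k\| + \|x^k-x^{k-1}\| < C\mu_k$ and $\|x^{k+2}-x^{k+1}\| + \|x^{k+1}-x^k\| < C\mu_{k+1}$. Combined with the bound from part (ii) and the explicit proximal formula (\ref{3.6}), these tight inequalities push each coordinate into a dichotomy at the four iterates $x^{k-1},x^k,x^{k+1},x^{k+2}$: either a pair of consecutive values of that coordinate is exactly zero, or the coordinate stays bounded away from zero by at least $v$; the second possibility is excluded for all but finitely many coordinates via the lower-bound property (Lemma~\ref{lemma2.3}) applied at accumulation points together with part (i) propagating zeros forward. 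Hence $k \in \mathcal{M}_1$. For (iii)(b), when $k \in \mathcal{M}_2\cup\mathcal{M}_3$ the lower bound $C\mu_k$ (or $C\mu_{k+1}$) on the two-step difference together with $\sum_k\mu_k^{-1}\|x^{k+1}-x^k\|^2 < \infty$ from Corollary~\ref{corollary 2} yields, via a Cauchy--Schwarz-type splitting, that contributions from these indices to $\sum\|x^{k+1}-x^k\|$ are summable. When $k \in \mathcal{M}_1$, part (i) ensures the support of $x^j$ for $j \ge k$ is contained in a fixed finite set, so on the active coordinates the problem reduces to one where standard strong-convexity estimates on $Q_{d^k}$ deliver summable differences. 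Combining, $\sum\|x^{k+1}-x^k\| < +\infty$, so $\{x^k\}$ is Cauchy and converges to some $x^*$, which is a lifted stationary point by Theorem~\ref{thm1}.

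The main obstacle is (iii)(a): the definitions of $\mathcal{M}_1,\mathcal{M}_2,\mathcal{M}_3$ are intricately tied to the schedule of $\mu_k$ and to the proximal-step geometry, and the dichotomy requires a delicate case analysis at borderline configurations where $|x^k_i|$ is near $v$ or where the constant $C$ interacts subtly with the threshold $\lambda/v$. Upgrading the $\ell^2$-summability of $\|x^{k+1}-x^k\|$ to $\ell^1$-summability in (iii)(b) is normally done via a Kurdyka--{\L}ojasiewicz inequality; here it must instead be extracted from the combinatorial support-stabilization argument enabled by parts (i) and (iii)(a), which is the creative element of the proof.
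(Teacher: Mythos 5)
Parts (i) and (ii) of your plan, and your handling of $\mathcal{M}_2$, $\mathcal{M}_3$ in (iii)(b), essentially coincide with the paper: (i) is exactly the argument via $y^k_i=0$, $d^k_i=1$ and the threshold in the closed-form prox (\ref{3.6}) (and since that step already uses the gradient bound, your ordering (ii) before (i) is in fact the cleaner logical order; the paper itself defers (ii) to Lemma 3.6 of \cite{bianSmoothingProximalGradient2020}, whose content is your sketch). Likewise, the $\ell^1$ upgrade over $\mathcal{M}_2\cup\mathcal{M}_3$ is precisely your splitting: the lower bound $\Vert x^{k+1}-x^k\Vert+\Vert x^{k}-x^{k-1}\Vert\ge C\mu_k$ lets one dominate the first power by $\mu_k^{-1}$ times its square, and Corollary \ref{corollary 2} finishes; no Kurdyka--{\L}ojasiewicz input is needed, and for $k\in\mathcal{M}_1$ no strong-convexity estimate is needed either---by (i), two consecutive zero values propagate forward, so those indices contribute nothing to the sum.

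The genuine gap is in your (iii)(a), which is the heart of the theorem. You resolve the dichotomy by claiming that the alternative ``the coordinate stays bounded away from zero by at least $v$'' is \emph{excluded} via the lower-bound property (Lemma \ref{lemma2.3}). That is incorrect: Lemma \ref{lemma2.3} only forces coordinates lying in $(-v,v)$ to vanish at a lifted stationary point; it says nothing against coordinates of magnitude at least $v$, and any nonzero limit $x^*$ necessarily has such coordinates. Under your logic, every $k\notin\mathcal{M}_2\cup\mathcal{M}_3$ would land in $\mathcal{M}_1$, i.e., every coordinate would exhibit two consecutive zeros, which by (i) forces $x^j\equiv 0$ eventually---so your argument could only succeed when $x^*=0$. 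The paper instead treats exactly the indices $j$ with $\vert x^k_j\vert\ge v$ (so $d^k_j\in\{2,3\}$) by explicit case analysis on the position of $w^k_j$ relative to the thresholds $\pm\frac{\lambda}{v}L^{-1}\mu_k$ in (\ref{3.6}): either the coordinate is thresholded to zero, feeding the $\mathcal{M}_1$/$\mathcal{M}_3$ branches via (\ref{3.21a})--(\ref{3.02}), or one extracts a quantitative two-step lower bound, namely $\vert x^{k+1}_j-x^k_j\vert\ge \frac{3}{2}(\lambda/v-L_f)L^{-1}\mu_k-\vert x^k_j-x^{k-1}_j\vert$ in one case, and $\vert x^{k+1}_j-x^k_j\vert\ge cL^{-1}\mu_{k+1}-\vert x^k_j-x^{k-1}_j\vert$ in the other, the latter resting on a posited lower bound $\vert\nabla_j\widetilde f(y^k,\mu_k)\vert\ge c$ (itself the most delicate point of the paper's proof). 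These estimates are precisely what produce the constant $C=\min\{\frac{c}{2}L^{-1},\frac{3}{4}(\lambda/v-L_f)L^{-1}\}$ defining $\mathcal{M}_2$ and $\mathcal{M}_3$; your proposal never generates them or the constant $C$, so the trichotomy---and with it the $\ell^1$ summability and convergence in (iii)(b)---is not actually established.
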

 \begin{proof}
 	(i) We just only need to prove that if $x^{k-1}_i=x^k_i=0$, we have $ x^{k+1}_i=0$.
 	Suppose $x^{k-1}_i=x^k_i=0$, then $d^k_i=1$   and $ y^k_i=x^k_i+\beta_{k-1}(x^k_i-x^{k-1}_i)=0. $
 	According to (\ref{3.6}) and $ v<\frac{\lambda}{L_f}$, we have\\
 $   \vert y^k_i-L^{-1}\mu_k\nabla_i\widetilde f(y^k,\mu_k)\vert\leq  L^{-1}\mu_k\Vert \nabla\widetilde f(y^k,\mu_k)\Vert < (\lambda L^{-1}\mu_k )/v. $\\
 	Thus, $ x^{k+1}_i=0 $ from (\ref{3.6}), so the result is proved.\\
 	(ii) The verification is similar to the Lemma 3.6 in \cite{bianSmoothingProximalGradient2020}, it is easy to get the result.\\
 	(iii) (a) Letting $ w^k=y^k- L^{-1}\mu_k\nabla\widetilde f(y^k,\mu_k)$, for any  $k\ge K$,  $j\in \left\{1,2,\cdots,n \right\} $, either $ \vert x^k_j\vert<v$ or 
 	$ \vert x^k_j\vert\ge v$. 
 	
 	First we disuss the index $j$ with respect to  $0<\vert x^k_j \vert <v$ on different three cases.\\
 	Case 1. If $\vert w^k_j \vert\leq \frac{\lambda}{v}L^{-1}\mu_k$, then we have $x^{k+1}_j=0$, which means that $d^{k+1}_{j} =1$ from (\ref{2.3}). We also discuss it on three different cases. 
 	 	 	
 	Case (1): If $\vert w^{k+1}_j \vert\leq \frac{\lambda}{v}L^{-1}\mu_{k+1}$, then $x^{k+2}_j=0$. 
 	From $\mathcal{A}(x^{k+3})\subseteq \mathcal{A}(x^{k+2})\cup\mathcal{A}(x^{k+1}), $ we get that
 	$x^{k+3}_j \equiv 0$ for sufficiently large $k$.  
 	
 	Case (2): If  $ w^{k+1}_j>\frac{\lambda}{v}L^{-1}\mu_{k+1}$, we have
 	\begin{align} \label{3.21a}
 		\vert x^{k+2}_j\vert =\vert x^{k+2}_j-x^{k+1}_j\vert\ge & \vert x^{k+2}_j-y^{k+1}_j\vert -\vert y^{k+1}_j-x^{k+1}_j\vert  \notag\\
 		= &  \vert x^{k+2}_j-y^{k+1}_j\vert -\beta_{k}\vert x^{k}_j\vert  \notag\\
 		\ge& \frac{\lambda}{v}L^{-1}\mu_{k+1}-\vert L^{-1}\mu_{k+1}\nabla_j\widetilde f(y^{k+1},\mu_{k+1})\vert-\vert x^{k}_j\vert \notag\\
 		\ge & \frac{1}{2}(\frac{\lambda}{v}-L_f)L^{-1}\mu_{k+1}-\vert x^{k}_j\vert,
 	\end{align}
 	where the second inequality derives from (\ref{3.6}), $0\leq\beta_{k}<1$
 	and the third inequality derives from (ii) of this Lemma.
 	
 	Case(3): 
 	If $ w^{k+1}_j<-\frac{\lambda}{v}L^{-1}\mu_{k+1}$, similar to the process of Case 1(2), it holds that
 	\begin{align}\label{3.211a}
 		\vert x^{k+2}_j\vert
 		\ge  \frac{1}{2}(\frac{\lambda}{v}-L_f)L^{-1}\mu_{k+1}-\vert x^{k}_j\vert,
 	\end{align}  
 Therefore, from (\ref{3.21a}) and (\ref{3.211a}), we have that
 \begin{align*}
 &\Vert x^{k+2} -x^{k+1} \Vert^2+\Vert x^{k+1} -x^k\Vert^2\\
 \ge&\vert x^{k+2}_j-0\vert^2 +\vert x^k_j-0\vert^2\\
 \ge& \frac{1}{8}(\frac{\lambda}{v}-L_f)^2(L^{-1}\mu_{k+1})^2,
 \end{align*}
where the second inequality derives from $ 2(a^2+b^2)\ge (a+b)^2$ for any $a,b\ge0$.
	According to $2(a+b)^2\ge a^2+b^2 $ for any $a,b\ge0$ again, we have
\begin{align}\label{3.02}
\Vert x^{k+2}-x^{k+1}\Vert+  \Vert x^{k+1}-x^{k}\Vert\ge \frac{1}{4}(\frac{\lambda}{v}-L_f)L^{-1}\mu_{k+1}.
\end{align}
 Case 2. If $ w^k_j>\frac{\lambda}{v}L^{-1}\mu_k$,  
 	similar to the process of Case 1(2), it holds that 
 	\begin{align} \label{3.21}
 		\vert x^{k+1}_j-x^k_j\vert 
 		\ge  \frac{1}{2}(\frac{\lambda}{v}-L_f)L^{-1}\mu_k-\vert x^{k}_j-x^{k-1}_j\vert,
 	\end{align}	
 	Case 3. If $ w^k_j<-\frac{\lambda}{v}L^{-1}\mu_k$, similar to the process of Case 1(3), we have
 	\begin{align}\label{3.22}
 		\vert x^{k+1}_j-x^k_j\vert
 		\ge  \frac{1}{2}(\frac{\lambda}{v}-L_f)L^{-1}\mu_k-\vert x^{k}_j-x^{k-1}_j\vert.
 	\end{align} 	
 	Thus, by using $2(a+b)^2\ge a^2+b^2 $ for any $a,b\ge0$, from (\ref{3.21}) and (\ref{3.22}), we get
 \begin{align}\label{3.03}
 \Vert x^{k+1}-x^k\Vert+  \Vert x^{k}-x^{k-1}\Vert\ge \frac{1}{4}(\frac{\lambda}{v}-L_f)L^{-1}\mu_k.
 \end{align}
 
  	Next, we focus on the index $j$ satisfing  $ x^k_j =0$, which means that $d^k_j=1$. Analogously, from the analysis of Case 1(2) and Case 1(3) of  $0<\vert x^k_j \vert< v$, we easily have $x^{k+1}_j=0$ or $\Vert x^{k+1}-x^{k}\Vert+  \Vert x^{k}-x^{k-1}\Vert\ge \frac{1}{4}(\frac{\lambda}{v}-L_f)L^{-1}\mu_{k}.$ 
 		
 At last, we disuss the index $j$  with respect to  $\vert x^k_j\vert \ge v$, which means that $d^k_j=2$ or $3$. Due to the analysis about  $d^k_j=2$ or $3$ is similar, we just discuss it when $d^k_j=2$.\\	
 	Case 1. If $\vert w^k_j+ \frac{\lambda}{v}L^{-1}\mu_k\vert\leq \frac{\lambda}{v}L^{-1}\mu_k$, then we have $x^{k+1}_j=0$, which means that $d^{k+1}_{j} =1$. We also discuss it on three different cases. 
 	
 	Case (1): If $\vert w^{k+1}_j \vert\leq \frac{\lambda}{v}L^{-1}\mu_{k+1}$, then $x^{k+2}_j=0$. 
 	From $\mathcal{A}(x^{k+3})\subseteq \mathcal{A}(x^{k+2})\cup\mathcal{A}(x^{k+1}), $ we get that
 	$x^{k+3}_j\equiv 0$ for sufficiently large $k$.  
 		
 	Case(2): If  $ w^{k+1}_j>\frac{\lambda}{v}L^{-1}\mu_{k+1}$, similar to the analysis of Case 1(2)  of  $0<\vert x^k_j \vert< v$,
 	we have the result (\ref{3.21a}). 	
 	
 	Case(3): 
 	If $ w^{k+1}_j<-\frac{\lambda}{v}L^{-1}\mu_{k+1}$, similar to the analysis of Case 1(3)  of  $0<\vert x^k_j \vert< v$,  we have the result (\ref{3.211a}).\\
 	Therefore, for case 1(2) and case 1(3) with respect to  $\vert x^k_j\vert \ge v$, we have (\ref{3.02}).\\
 	Case 2. If $ w^k_j+\frac{\lambda}{v}L^{-1}\mu_k>\frac{\lambda}{v}L^{-1}\mu_k$, 
 	\begin{align} 
 		\vert x^{k+1}_j-x^{k}_j\vert \ge & \vert x^{k+1}_j-y^{k}_j\vert -\vert y^{k}_j-x^{k}_j\vert  \notag\\
 		= &  \vert x^{k+1}_j-y^{k}_j\vert -\beta_{k-1}\vert x^{k}_j-x^{k-1}_j\vert  \notag\\
 		\ge& \vert L^{-1}\mu_{k}\nabla\widetilde f(y^{k},\mu_{k})\vert-\vert x^{k}_j-x^{k-1}_j\vert \notag  
 	\end{align} 
 	it holds that 
 	\begin{align}  
 		\vert x^{k+1}_j-x^k_j\vert 
 		\ge   L^{-1}\mu_{k}\vert\nabla_j\widetilde f(y^{k},\mu_{k})\vert-\vert x^{k}_j-x^{k-1}_j\vert \notag
 	\end{align}
 	Since $\widetilde f(y^{k},\mu_{k})$ is gradient lipschitz continuous on $3\mathcal {X}$  with respect to $y^{k}$, there exist a constant $c>0$ such that
 	$	\vert\nabla_j\widetilde f(y^{k},\mu_{k})\vert\ge c$, that is
 	\begin{align}\label{3.06}  
 		\vert x^{k+1}_j-x^k_j\vert 
 		\ge  cL^{-1}\mu_{k+1}-\vert x^{k}_j-x^{k-1}_j\vert  
 	\end{align} 	
 	Case 3. If $ w^k_j+\frac{\lambda}{v}L^{-1}\mu_k<-\frac{\lambda}{v}L^{-1}\mu_k$,  it has that
 	\begin{align} \label{3.07}
 		\vert x^{k+1}_j-x^{k}_j\vert \ge & \vert x^{k+1}_j-y^{k}_j\vert -\vert y^{k}_j-x^{k}_j\vert  \notag\\
 		\ge& \frac{2\lambda L^{-1}\mu_k}{v}-\vert L^{-1}\mu_{k}\nabla\widetilde f(y^{k},\mu_{k})\vert
 		-\vert x^{k}_j-x^{k-1}_j\vert \notag  \\
 		\ge& \frac{2\lambda L^{-1}\mu_k}{v}- \frac{L^{-1}\mu_{k}}{2}(\frac{\lambda}{v}+L_f)
 		-\vert x^{k}_j-x^{k-1}_j\vert \notag  \\
 		\ge& \frac{3}{2}(\frac{\lambda}{v}-L_f)L^{-1}\mu_{k} -\vert x^{k}_j-x^{k-1}_j\vert.
 	\end{align}
  Therefore, from (\ref{3.06}) and (\ref{3.07}), we have
 	\begin{align}\label{3.09}
 	\Vert x^{k+1}-x^{k}\Vert+  \Vert x^{k}-x^{k-1}\Vert\ge C\mu_{k}.
 	\end{align}
 	where $C = \min \{\frac{3}{4}(\frac{\lambda}{v}-L_f)L^{-1},\frac{c}{2}L^{-1}\}$.
 	
 In conclusion, for any $i=\{1,2,\cdots,n \}$ and sufficiently large $k$, we have the result
  $x^{k}_i=0, x^{k+1}_i=0$ or $ \Vert x^{k+1}-x^k\Vert+  \Vert x^{k}-x^{k-1}\Vert\ge C\mu_k$ or $ \Vert x^{k+2}-x^{k+1}\Vert+  \Vert x^{k+1}-x^{k}\Vert\ge C\mu_{k+1}$. That is, $k\in\mathcal{M}_1$ or $k\in\mathcal{M}_2$ or $k\in\mathcal{M}_3$. It is need to mention that $\mathcal{M}_2$ and $\mathcal{M}_3$ may have the same elements.\\
 	This completes the proof.\\
 	(b)  Since		
 	$\mathcal{A}(x^{k+2})\subseteq\mathcal{A}(x^{k+1}) \cup \mathcal{A}(x^{k})$,
 	for  $k\in\mathcal{M}_{1} $, it  satisfies $x^{k+2}\equiv 0$, therefore,
 	$\sum_{k\in\mathcal{M}_{1}}\Vert x^{k+1}-x^{k}\Vert < +\infty.$
 	
 	For $k\in\mathcal{M}_{2}$, it follows that
 	\begin{align}
 		& C\sum_{k\in\mathcal{M}_{2}}\Vert x^{k+1}-x^{k}\Vert +\Vert x^{k}-x^{k-1}\Vert \notag\\
 		\leq  &C\sum_{k\in\mathcal{M}_{2}}\mu^{-1}_k (\Vert x^{k+1}-x^{k}\Vert +\Vert x^{k}-x^{k-1}\Vert)^2  \notag\\
 		\leq& C\sum_{k\in\mathcal{M}_{2}}\mu^{-1}_k
 		(2{\Vert x^{k+1}-x^k\Vert}^2 +2{\Vert x^{k}-x^{k-1}\Vert}^2)\notag\\
 		\leq& C\sum_{k\in\mathcal{M}_{2}}
 		(2{\mu^{-1}_{k+1}\Vert x^{k+1}-x^k\Vert}^2 +2\mu^{-1}_{k}{\Vert x^{k}-x^{k-1}\Vert}^2)\notag\\
 		< &+\infty,  \notag
 	\end{align}
 	where the first inequality comes from the result (iii)(a) of this Lemma, the second one derives from $(a+b)^2\leq 2(a^2+b^2)$ for any $a,b\ge 0$, the third one due to the nonincreasing of $\mu_k$ and the last one comes from Corollary \ref{corollary 2}.
 	From the above analysis, then we get
 	\begin{align}\label{3.26}
 		C\sum_{k\in\mathcal{M}_{2}}\Vert x^{k+1}-x^{k}\Vert
 		&\leq C\sum_{k\in\mathcal{M}_{2}}(\Vert x^{k+1}-x^{k}\Vert +\Vert x^{k}-x^{k-1}\Vert) \notag\\
 		&< +\infty,
 	\end{align}
 	that is,
 	\[
 	\sum_{k\in\mathcal{M}_{2}}\Vert x^{k+1}-x^{k}\Vert < +\infty.
 	\]
 	Similarly, \[
 	\sum_{k\in\mathcal{M}_{3}}\Vert x^{k+1}-x^{k}\Vert \ge +\infty.
 	\]
Thus,
	\begin{align*}
	&\sum_{k}\Vert x^{k+1}-x^k\Vert\\
	\leq& \sum_{k\in\mathcal{M}_{1}}\Vert x^{k+1}-x^k\Vert+\sum_{k\in\mathcal{M}_{2}}\Vert x^{k+1}-x^k\Vert+\sum_{k\in\mathcal{M}_{3}}\Vert x^{k+1}-x^k\Vert
 	<+\infty. 
 	\end{align*}
 	Moreover, since $\{x^k\}$ is bounded, then we have $x^k\to x^*$ as $k\to+\infty$, where $x^*$ is a lifited stationary point of (\ref{1.2}).\qed
 	\begin{remark}
 In fact,
 the global convergence of the SPG algorithm in \cite{bianSmoothingProximalGradient2020} can also be obtained by the idea of discussing each case according to the index of sequence value. 		
 		\end{remark}
 \end{proof}

%

%
At the last part of the section, we aim at proving the local convergence rate of SPGE with respect to the proximal residual,  where the proximal residual is defined as
\begin{align}\label{3.27}
r(x):= x- prox_g(x-\nabla f(x)),
\end{align}
where $g$ is convex (possibly nonsmooth) function and $f$ is a smooth(possibly nonconvex) function. Obviously, $r(x)=0$ is equivalent to $x=prox_g(x-\nabla f(x))$ from \cite{parikhProximalAlgorithms2014b}, which characterize  the first order optimality condition of $\min_{x\in R^n} f(x)+g(x)$. Thus, it can also be seen as a quality to measure
the convergence of the sequence  $\left\{ x^k \right\}$ to a critical point. First, we present some preparatory work.
\begin{lemma}\rm{(\cite{ochsIPianoInertialProximal2014b,Nesterov Y. Gradient methods})}
\label{lemma3.5}
Suppose $y,z\in R^n$, $\alpha>0$. Define the functions
\[
p_g(\alpha):=\frac{1}{\alpha}\Vert prox_{\alpha g}(y-\alpha z)-y\Vert
\]
and
\[ q_g(\alpha):=\Vert prox_{\alpha g}(y-\alpha z)-y \Vert.
\]
Then $p_g(\alpha)$ is a decreasing function of $\alpha$, and $q_g(\alpha)$ is increasing in $\alpha$.
\end{lemma}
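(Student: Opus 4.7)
The plan is to exploit monotonicity of $\partial g$, which is guaranteed because $g$ is convex, together with a Cauchy--Schwarz bound. Fix $0<\alpha_1<\alpha_2$ and set $u_i:=prox_{\alpha_i g}(y-\alpha_i z)$ and $v_i:=u_i-y$ for $i=1,2$. The first-order optimality condition defining $u_i$ gives
\[
-\frac{v_i}{\alpha_i}-z\in\partial g(u_i),\qquad i=1,2,
\]
so the monotonicity inequality for $\partial g$ applied at $u_1,u_2$ reads
\[
\Big\langle \frac{v_2}{\alpha_2}-\frac{v_1}{\alpha_1},\, v_1-v_2\Big\rangle\ge 0.
\]
Expanding and bounding the resulting cross term by Cauchy--Schwarz, $\langle v_1,v_2\rangle\le\Vert v_1\Vert\Vert v_2\Vert$, and then clearing denominators by multiplying through by $\alpha_1\alpha_2$ would yield the clean quadratic inequality
\[
\alpha_2\Vert v_1\Vert^2-(\alpha_1+\alpha_2)\Vert v_1\Vert\Vert v_2\Vert+\alpha_1\Vert v_2\Vert^2\le 0.
\]

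The decisive observation is that this quadratic factors, namely
\[
(\alpha_2\Vert v_1\Vert-\alpha_1\Vert v_2\Vert)(\Vert v_1\Vert-\Vert v_2\Vert)\le 0.
\]
I would then conclude both monotonicity claims by a short case analysis. In the generic case the two factors carry opposite signs, forcing simultaneously $\Vert v_1\Vert\le\Vert v_2\Vert$ (hence $q_g(\alpha_1)\le q_g(\alpha_2)$) and $\Vert v_1\Vert/\alpha_1\ge\Vert v_2\Vert/\alpha_2$ (hence $p_g(\alpha_1)\ge p_g(\alpha_2)$). The only other possibility --- $\Vert v_1\Vert\ge\Vert v_2\Vert$ together with $\alpha_2\Vert v_1\Vert\le\alpha_1\Vert v_2\Vert$ --- forces $\alpha_2\Vert v_2\Vert\le\alpha_2\Vert v_1\Vert\le\alpha_1\Vert v_2\Vert$, so that the strict inequality $\alpha_1<\alpha_2$ collapses it to the degenerate situation $v_1=v_2=0$, in which both claims hold trivially.

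The main obstacle I anticipate is spotting the factorization: monotonicity of $\partial g$ combined with Cauchy--Schwarz naturally produces a symmetric quadratic form in $(\Vert v_1\Vert,\Vert v_2\Vert)$, and it is not obvious a priori that its factorization matches exactly the two quantities whose monotonicity is being asserted. Once the factorization is in hand the rest is bookkeeping, and notably no information about $z$ and no smoothness of $g$ is used --- convexity of $g$ is the only structural assumption required, consistent with the statement of the lemma.
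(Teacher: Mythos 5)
Your proof is correct and complete: the prox optimality condition $-v_i/\alpha_i - z \in \partial g(u_i)$, monotonicity of $\partial g$, Cauchy--Schwarz, and the exact factorization
\[
\alpha_2\Vert v_1\Vert^2-(\alpha_1+\alpha_2)\Vert v_1\Vert\,\Vert v_2\Vert+\alpha_1\Vert v_2\Vert^2
=\bigl(\alpha_2\Vert v_1\Vert-\alpha_1\Vert v_2\Vert\bigr)\bigl(\Vert v_1\Vert-\Vert v_2\Vert\bigr)\le 0
\]
all check out, and your case analysis correctly disposes of the degenerate branch. For comparison: the paper supplies no proof of its own here --- the lemma is imported by citation from the iPiano paper of Ochs et al.\ and from Nesterov's work on composite gradient methods --- and your argument is essentially the standard one found in those sources, so there is no genuine divergence of method. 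Two small remarks. First, what you prove (and all that is true in general) is monotonicity in the weak sense, e.g.\ $p_g(\alpha_1)\ge p_g(\alpha_2)$ for $\alpha_1<\alpha_2$; that is exactly how the lemma is used in the proof of Theorem 3.8, where the bound $r(x^k)\le \min\{1,L^{-1}\mu_k\}^{-1}\Vert prox_{L^{-1}\mu_k\lambda\Phi^{d^k}}(\cdot)-x^k\Vert$ needs nothing stronger. Second, in the paper's application the prox is taken over the constraint set $\mathcal X$, i.e.\ effectively $g=\lambda\Phi^{d^k}+\delta_{\mathcal X}$ with $\delta_{\mathcal X}$ the indicator function; this $g$ is still proper closed convex, so your convexity-only hypothesis covers the constrained setting without modification.
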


\begin{theorem}\label{thm4}
 Suppose that $\left\{ x^{k} \right\}$ is a sequence generated by SPGE algorithm and $r(x^k)$ is a proximal residual defined by (\ref{3.27}). Then for any $k\in\mathbb{N}$ and $\sigma\in(0,1)$, when
 $\overline\mu<L$,
 it holds that
 \[   {\rm\mathop{min}}_{0\le k\le K}{\Vert r(x^k) \Vert}^2\le \frac{C_1}{(K+1)^{1-\sigma}},   \]
    where  $C_1=L^2C_0$ and $C_0=8(H(x^{0},x^{-1} ,\mu_{-1})-\zeta)$ .
  \end{theorem}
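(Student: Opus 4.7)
The plan is to combine three ingredients already in hand: the nonincrease of the auxiliary sequence $\{H(x^{k+1},x^k,\mu_k)+\kappa\mu_k\}$ (Lemma~\ref{lemma3.3}), the summability bounds $\sum_k \mu_{k+1}^{-1}\|x^{k+1}-x^k\|^2<+\infty$ and $\sum_k \mu_k^{-1}\|x^{k+1}-x^k\|^2<+\infty$ (Corollary~\ref{corollary 2}), and the monotonicity of the proximal map in its step size parameter (Lemma~\ref{lemma3.5}). The assumption $\overline\mu<L$ is precisely what lets us invoke Lemma~\ref{lemma3.5} to upgrade an estimate at the algorithm's step size $L^{-1}\mu_k$ into one at step size $1$, which is the step size used in the definition of $r$.

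First I would recall that $x^{k+1}$ solves the proximal subproblem, so it satisfies
\[ x^{k+1}=\mathrm{prox}_{\lambda L^{-1}\mu_k\Phi^{d^k}}\!\bigl(y^k-L^{-1}\mu_k\nabla\widetilde f(y^k,\mu_k)\bigr), \]
with the feasible set $\mathcal{X}$ absorbed into the prox by the indicator function. Fixing $y=y^k$ and $z=\nabla\widetilde f(y^k,\mu_k)$, Lemma~\ref{lemma3.5} says that $p_g(\alpha)=\alpha^{-1}\|\mathrm{prox}_{\alpha g}(y-\alpha z)-y\|$ is nonincreasing. Because $\overline\mu<L$ gives $L^{-1}\mu_k\le L^{-1}\overline\mu<1$, evaluating $p_g$ at $\alpha=1$ and at $\alpha=L^{-1}\mu_k$ yields
\[ \bigl\|\mathrm{prox}_{g}(y^k-\nabla\widetilde f(y^k,\mu_k))-y^k\bigr\|\le L\mu_k^{-1}\|x^{k+1}-y^k\|. \]
This bounds the proximal residual anchored at $y^k$. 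To obtain the residual at $x^k$, I would then plug in $y^k=x^k+\beta_{k-1}(x^k-x^{k-1})$, use the triangle inequality together with the Lipschitz continuity of $\nabla_x\widetilde f(\cdot,\mu_k)$ (Definition~\ref{def3.1}(v)) to move the base point, and arrive at an estimate of the form
\[ \|r(x^k)\|^2\le c\, L^2\mu_k^{-2}\bigl(\|x^{k+1}-x^k\|^2+\beta_{k-1}^2\|x^k-x^{k-1}\|^2\bigr) \]
for a universal constant $c$, using $(a+b)^2\le 2(a^2+b^2)$.

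Next I would sum this bound from $k=0$ to $K$. The right-hand side becomes a weighted sum of squared step differences with weight $\mu_k^{-2}$. Since $\{\mu_k\}$ is nonincreasing, one has $\mu_k^{-2}\|x^{k+1}-x^k\|^2\le \mu_K^{-1}\cdot \mu_k^{-1}\|x^{k+1}-x^k\|^2$, so Corollary~\ref{corollary 2} (whose proof pins down the summability constant $H(x^{0},x^{-1},\mu_{-1})+\kappa\mu_{-1}-\zeta$) yields
\[ \sum_{k=0}^{K}\|r(x^k)\|^2\le L^2\mu_K^{-1}\cdot C_0 \]
for $C_0=8(H(x^{0},x^{-1},\mu_{-1})-\zeta)$ (after absorbing the doubling constants and the $\kappa\mu_{-1}$ contribution). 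By the update rule~(\ref{3.10}) and the nonincrease of $\{\mu_k\}$, one always has $\mu_K\ge \mu_0/(K+1)^{\sigma}$, hence $\mu_K^{-1}\le (K+1)^{\sigma}/\mu_0$. The final step is the standard $\min_{0\le k\le K}\|r(x^k)\|^2\le (K+1)^{-1}\sum_{k=0}^{K}\|r(x^k)\|^2$, which produces the claimed $C_1/(K+1)^{1-\sigma}$ rate.

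The main technical obstacle is the third paragraph's transport from the residual at $y^k$ (which is what the proximal characterization of $x^{k+1}$ naturally gives) to the residual at $x^k$ (which is what the statement asks for). This is where one must simultaneously control $\|x^{k+1}-x^k\|$ and the extrapolation term $\beta_{k-1}\|x^k-x^{k-1}\|$, and where the two summability bounds in Corollary~\ref{corollary 2} (one with weight $\mu_k^{-1}$ and one with weight $\mu_{k+1}^{-1}$) are both used in order to keep the constants independent of $K$; the rest of the argument is a bookkeeping exercise in which the condition $\overline\mu<L$ plays the crucial role of enabling the monotone-prox comparison from Lemma~\ref{lemma3.5}.
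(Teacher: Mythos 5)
Your proposal is correct and follows essentially the same route as the paper's proof: the same prox reformulation of the subproblem (\ref{3.7a}), the same use of Lemma~\ref{lemma3.5} (enabled by $\overline\mu<L$) to pass from step size $L^{-1}\mu_k$ to step size $1$, the same transport between $x^k$ and $y^k$ via nonexpansiveness of the prox and the $\widetilde L\mu_k^{-1}$-Lipschitz gradient, and the same finish by summing both weighted bounds from Corollary~\ref{corollary 2}, taking the minimum over $0\le k\le K$, and invoking $\mu_K\ge \mu_0/(K+1)^\sigma$. The only cosmetic difference is that you anchor the Lemma~\ref{lemma3.5} comparison at $y^k$ and then move the base point at step size $1$, whereas the paper anchors at $x^k$ and compares the two proximal points at step size $L^{-1}\mu_k$, which gives the slightly cleaner inequality (\ref{H}); your absorbed universal constant changes $C_0$ only by a harmless fixed factor, at the same level of constant bookkeeping as the paper itself.
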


 \begin{proof}
       First, we reformulate $Q_{d^k}(x,y^k,\mu_k)$ in the form of proximal type, that is
    \[ {argmin}_{x\in\mathcal X} Q_{d^k}(x,y^k,\mu_k)=
        prox_{ L^{-1}\mu_k\lambda{\Phi}^{d^k}}(y^k- L^{-1}\mu_k\nabla \widetilde f(y^k, \mu_k)).
   \]
   From Lemma \ref{lemma3.5},  we have that
 \begin{align*}
r(x^k)&=\Vert prox_{\lambda{\Phi}^{d^k}}(x^k-\nabla \widetilde f(x^k, \mu_k))-x^k\Vert\\
&\le\frac{1}{\min\{1,L^{-1}\mu_k \}}
\Vert prox_{L^{-1}\mu_k\lambda{\Phi}^{d^k}}(x^k-L^{-1}\mu_k \nabla \widetilde f(x^k, \mu_k))-x^k \Vert,
\end{align*}
Since $\mu_k\leq\overline\mu<L$, it has that
\begin{align*}
 L^{-1}\mu_kr(x^k)&\le
 \Vert prox_{\lambda{\Phi}^{d^k}L^{-1}\mu_k}(x^k-L^{-1}\mu_k \nabla \widetilde f(x^k, \mu_k))-x^k   \Vert\\
 &\le \Vert prox_{\lambda{\Phi}^{d^k}L^{-1}\mu_k}(x^k-L^{-1}\mu_k\nabla \widetilde f(x^k, \mu_k))\\
 &\quad-prox_{\lambda{\Phi}^{d^k}L^{-1}\mu_k}(y^k-L^{-1}\mu_k\nabla \widetilde f(y^k, \mu_k))  \Vert\\
 &\quad+\Vert prox_{\lambda{\Phi}^{d^k}L^{-1}\mu_k}(y^k-L^{-1}\mu_k \nabla \widetilde f(y^k, \mu_k))-x^k \Vert\\
 &\le \Vert x^k-y^k \Vert +L^{-1}\widetilde L\Vert x^k-y^k \Vert+\Vert x^{k+1}-x^k \Vert \\
 &\le 2\Vert x^k-y^{k} \Vert+\Vert x^{k+1}-x^k \Vert,
 \end{align*}
 where the third inequality derives from the nonexpansiveness of the proximal operator and the lipschitz
  continuous of the gradient $\nabla f(\cdot,\mu_k)$.\\
According to $\beta_{k-1}\in[0,1)$ , the above inequality holds as follows
\begin{align}\label{H}
   L^{-1}{\mu_k} r(x^k)\leq2\Vert x^k-x^{k-1}\Vert+\Vert x^{k+1}-x^k \Vert .
\end{align}
Squaring  above inequality on both sides, using $2(a^2+b^2)\ge(a+b)^2$ for any $a,b\ge 0$ and multiplying
$\mu^{-1}_k$ on both sides, it can be reformulated as the following
 \begin{align}\label{C}
  L^{-2}{\mu_k}  r^2(x^k)\!<\!8{\mu^{-1}_k} \Vert\! x^k\!-\!x^{k-1}\!\Vert^2
\!+\!2\mu^{-1}_k \Vert\! x^{k+1}-x^k\!\Vert^2,
 \end{align}
 Summing  up (\ref{C}) from $k=0$ to $K$, it yields
\begin{align*}
  \sum_{k=0}^{K} L^{-2}{\mu_k}  r^2(x^k)
&\le  \sum_{k=0}^{K}8 {\mu^{-1}_{k}}\Vert x^k-x^{k-1}\Vert^2+\sum_{k=0}^{K}2\mu^{-1}_k \Vert x^{k+1}-x^k \Vert^2,
\end{align*}
Further, from the nonincreasing of $\mu_k$ and from Corollary \ref{corollary 2}, we have
\begin{align*}
&({L^{-2}}{\mu_K} (K+1)){\rm{\mathop{min}\limits_{0\le k\le K}}}{r^2(x^k)}\\
\le&(L^{-2} (K+1)) {\rm{\mathop{min}\limits_{0\le k\le K}}}\mu_k{r^2(x^k)}\\
 \le&  L^{-2} \sum_{k=0}^{K} {\mu_k r^2(x^k) } \\
 \le& \sum_{k=0}^{K} 8{\mu^{-1}_{k}}\Vert x^k-x^{k-1}\Vert^2+\sum_{k=0}^{K}2\mu^{-1}_k \Vert x^{k+1}-x^k \Vert^2\\
\le & C_0,
\end{align*}
 where $C_0=8(H(x^{0},x^{-1} ,\mu_{-1})-\zeta) $. \\
According to $\mu_k\ge \frac{\mu_0}{k^\sigma},$ it has that
\begin{align}\label{B}
 {\rm{\mathop{min}\limits_{0\le k\le K}}}{r^2(x^k)}\le \frac{C_1}{(K+1)\mu_{K+1}} \le\frac{C_1}{(K+1)^{1-\sigma}},
\end{align}
where $C_1=L^2C_0$, $0< \sigma <1$.
This completes the proof. \qed
 \end{proof}

  \begin{remark}
  From Theorem \ref{thm1}, it is observed that the convergence of a subsequence of ${\mu^{-1}_k}\Vert x^{k+1}-x^k\Vert$ is a sufficient optimality condition of the SPGE algorithm. Then from the inequality
  (\ref{H}), that is
    \[L^{-1}r(x^k)<2{\mu^{-1}_k} \Vert x^k-x^{k-1}\Vert+ {\mu^{-1}_k}\Vert x^{k+1}-x^k \Vert ,\]it is concluded that the proximal residual $r(x^k)$ can be taken as a quality to measure the convergence of the SPGE algorithm. 
 \end{remark}
 \section{Numerical Experiments }\label{sec4}
In this section, we perform  numerical experiments on three different problems which are deriving from \cite{bianSmoothingProximalGradient2020} to illustrate the efficiency of the SPGE algorithm  and all the experiments are carried out  on 1.80GHz Core i5 PC with 12GB of RAM. The stopping standard  is
set as
\begin{align}\label{4.1}
\rm number\, of\, iterations\, \leq Maxiter \qquad or \qquad \mu_k\leq\epsilon.
\end{align}

In the following, we use ''Iter'' to present the number of iterations , ''time'' to present the CPU time in seconds,  $\widetilde x$ and  $\overline x$ to present the output of the SPG algorithm and SPGE algorithm respectively. The extrapolation item is $ \beta_k\in[0,\sqrt{(1-a\frac{\mu_{k+1}}{\mu_k})\frac{\mu_{k+1}}{\mu_k}}]$ with $0<a\ll 1$. The value of $a$ is chosen as $a=0.0001$ in the following numerical experient.

Before we show the experiments, we present the smoothing function for $l_1$ loss function and censored regression loss function in the following.

For the $l_1$ loss function,
\[ f(x)=\frac{1}{m}\Vert Ax-b \Vert_1,
\]
where$\;A\in R^{m\times n},\;b\in R^m$.\\
The smoothing function is defined as
\begin{align}
\widetilde f(x,\mu)=\frac{1}{m}\sum_{i=1}^{m}\widetilde\theta(A_ix-b_i, \mu) \quad with \quad
\widetilde\theta(s,\mu)=\begin{cases}
\vert s \vert \;                 &if \,\vert s \vert>\mu,\\
\frac{s^2}{2\mu}+\frac{\mu}{2}\; &if \,\vert s\vert\leq\mu.
\end{cases}
\end{align}

     For the nonsmooth convex censored regression loss function
 \[ f(x)= \frac{1}{pm}\sum_{i=1}^{m}\vert \max\left\{A_ix-c_i, 0 \right\}-b_i \vert^p,
 \]
where $p\in[1,2],A^T_i\in R^n$ and $c_i, b_i\in R$, $i=1,\cdots,m$.\\
When $p=1$, the smoothing function is defined as
\begin{align}
\widetilde f(x,\mu)\!=\!\frac{1}{m}\sum_{i=1}^{m}\widetilde\theta(\widetilde\phi( A_ix,\mu)-b_i,\mu_i)\; with \;
\widetilde\phi(s,\mu)\!=\!\begin{cases}
\max \left\{s,0 \right\}                &if \vert s \vert>\mu,\\
\frac{(s+\mu)^2}{4\mu}\; &if \vert s\vert\leq\mu.
\end{cases}
\end{align}

\begin{example}
We first consider the following problem
\begin{align}\label{4.2}
\min\limits_{0\leq x_1,x_2\leq 1}{\mathcal{F}_{l_0}(x_1,x_2)}:=\vert x_1+x_2-1\vert+\lambda
{\Vert x\Vert}_{0}.
\end{align}
\end{example}
 Let $\mathcal{GM}$ denote the global minimizers of (\ref{4.2}).
  \begin{table}[htb]
\centering
\caption{Numerical results of the SPGE and SPG algorithm for problem (\ref{4.3})($m\ll n$)}
 \resizebox{\textwidth}{!}{
\begin{tabular}{ccccccccc} 
\toprule
 \multirow{2}{*} { $\lambda$ }      & \multirow{2}{*}{ $\mathcal{GM}$ }  & \multirow{2}{*}v & \multirow{2}{*} {$\widetilde x $} & \multirow{2}{*}{$\overline x $ }& \multicolumn{2}{l}{Iter} & \multicolumn{2}{l}{time} \\
  \cmidrule(lr){6-7}   \cmidrule(lr){8-9}
 &  &  & & &SPG& SPGE&SPG& SPGE \\
  \midrule
  	0.7/0.8/0.9 &(1,0),(0,1)&0.4/0.5/0.6&(1,0)/(1,0)/(1,0)&(1,0)/(1,0)/(1,0)& 15/14/13&10/7/7 &0.0336/0.0369/0.0341&0.0302/0.0319/0.0302\\  
			1/1/1&(1,0),(0,1),(0,0)&0.7/0.5/0.3&(0,0)/(1,0)/(0.6,0.4)&(1,0)/(0,0)/(1,0)&13/13/14&7/10/10&0.0314/0.0328/0.0353&0.0311/0.0316/0.0349	\\ 
			1.2/1.3/1.4&(0,0)&0.8/0.9/1 &(0,0)/(0,0)/(0,0)& (0,0)/(0,0)/(0,0) &16/25/23 & 7/8/5&0.0299/0.0296/0.0296
		&0.0288/0.0281/0.0281
			   \\
    \bottomrule
  \end{tabular}}
\end{table}

The parameters in SPG algorithm are set as default values and the parameters of  SPGE algorithm are set as in the following:
\[   L=\sqrt 2,  \alpha=1 ,   \sigma=0.9 ,  \rm Maxiter=10^4 ,  \kappa=\frac{1}{2},\epsilon=10^{-3},  L_f=\sqrt 2. \]

The extrapolation term $\beta_k$ in the SPGE algorithm is chosen as  in the way of (\ref{A}). At the initialize point, we set $\mu_{-1}=\mu_0=\mu_1=0.1$ and $t_{-1}=1$ and  $x_0=(1,0.8)'$ for  SPGE algorithm. In order to compare  the numerical efficiency of the SPG algorithm with SPGE algorithm fairly, we perform 20 experiments in each case. The details of the parameters and the results are presented in Table 1. From Table 1, we can see that there is no influence for different $v$ in SPGE algorithm getting global minimizers. In fact, the inertial term in SPGE algorithm  has the ability to get away from the spurious stationary point sometimes though it is not used to find the global minimizer. This phenomenon has happened in other cases, one can refer to \cite{Bertsekas D P Nonlinear,ochsIPianoInertialProximal2014b}.
   While for SPG algorithm, only when $ v\ge 0.5$, it can find the global minimizers.  Besides, it is observed that the iterations and the time of the SPGE algorithm are less than the SPG algorithm on average.

\begin{example}
 Consider the linear regression problem, i.e.,
\begin{align}\label{4.3}
\min\limits_{\boldsymbol{0}\leq x\leq 10\boldsymbol{1_n}}{\mathcal{F}_{l_0}(x)}:=\frac{1}{m}{\Vert Ax-b\Vert}_1+\lambda
{\Vert x\Vert}_{0},
\end{align}
where $ b\in R^m$ and $A\in R^{m\times n}(m\ll n )$.
\end{example}
 The sensing matrix $A$, obeservation $b$ and  the parameters for SPG algorithm are set as in same way in \cite{bianSmoothingProximalGradient2020}, one can refer to \cite{bianSmoothingProximalGradient2020} in details. The parameters for SPGE algorithm are set as in the following:
\[ L=2,\, \alpha=1,\,\sigma=0.9,\, \rm Maxiter=10^4,\, \kappa=\frac{1}{2}, \, \epsilon=10^{-3}.\]
Setting $\lambda=18.8$, $L_f=\Vert A\Vert_{\infty}$ and $v=\min \left\{\frac{\lambda}{L_f},10\right\}$, $x_0= 1.97*ones(n,1)$. The extrapolation term $\beta_k$ in the SPGE algorithm is chosen as  in the way of (\ref{A}) with the fixed restart systems and we perform a fixed restart every 500 iterations.
 We denote the SPGE algorithm as s-FISTA.    At the initialize point, we set $\mu_{-1}=\mu_0=\mu_1=50$ and $t_{-1}=1$.


\begin{figure}[htpb]
\footnotesize
\centering
\begin{minipage}[t] {0.45 \textwidth}
   \includegraphics[scale=0.25]{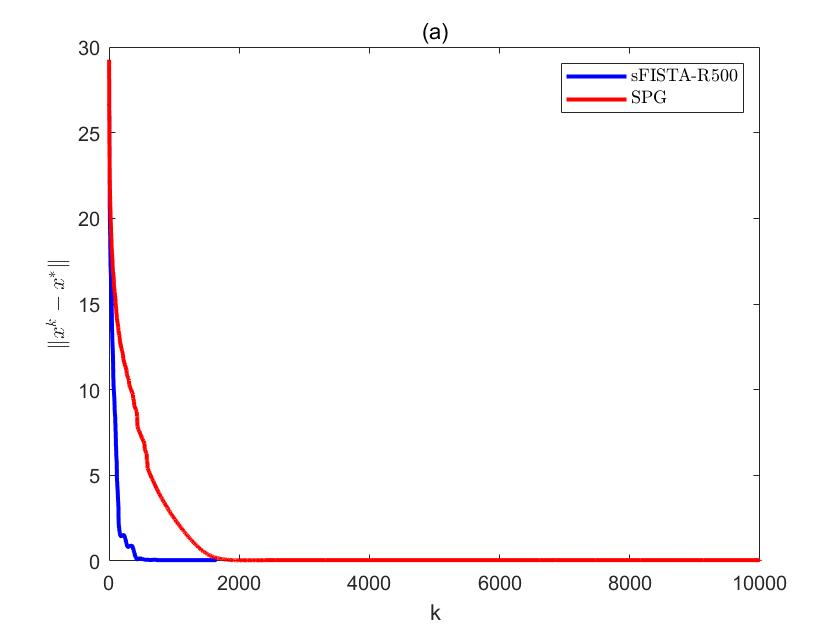}
\end {minipage}
\centering
\begin{minipage}[t] {0.45\textwidth}
	\includegraphics[scale=0.25 ]{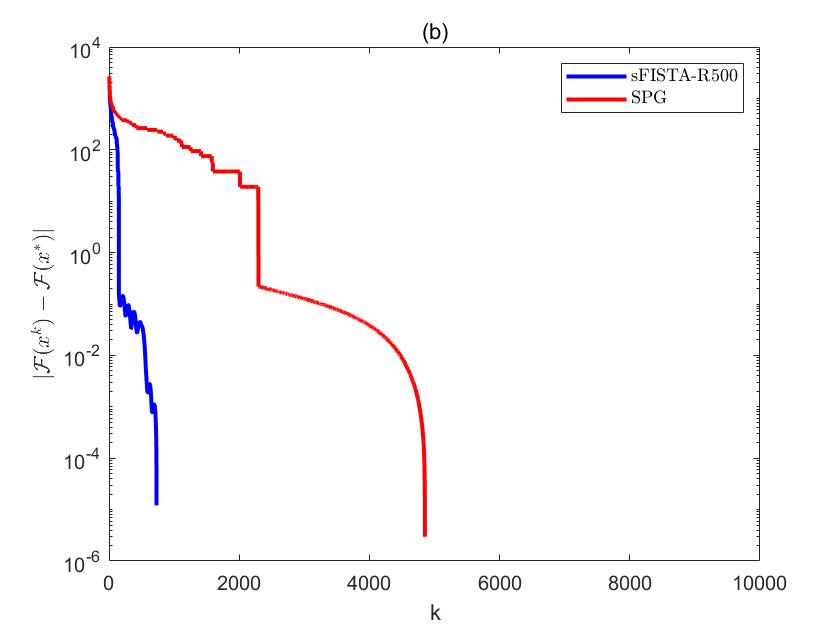}
         \end{minipage}
\centering\caption{$m=60,n=120$ for problem (41)}
  \end{figure}

%
\begin{figure}[htpb]
\footnotesize
\centering
\begin{minipage}[t] {0.45 \textwidth}
   \includegraphics[scale=0.25]{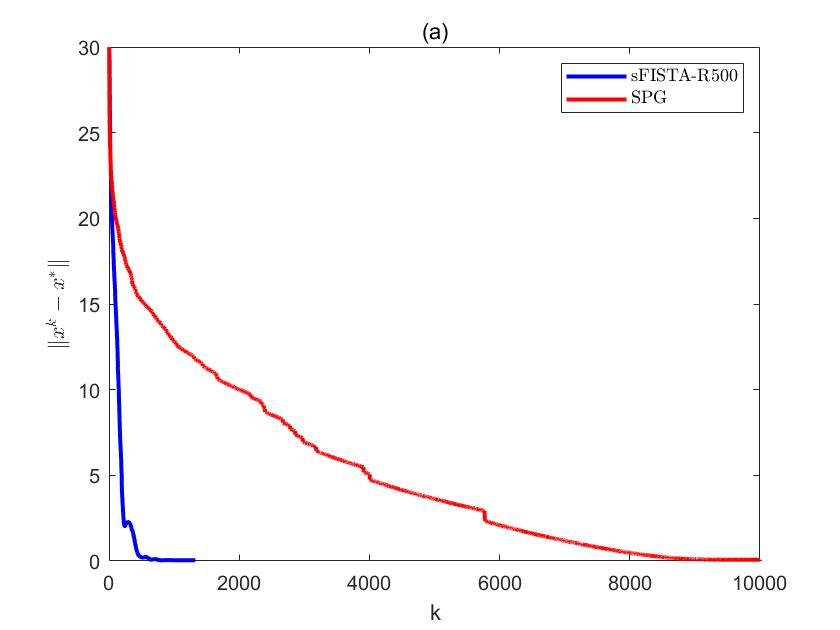}
\end {minipage}
\centering
\begin{minipage}[t] {0.45\textwidth}
	\includegraphics[scale=0.25]{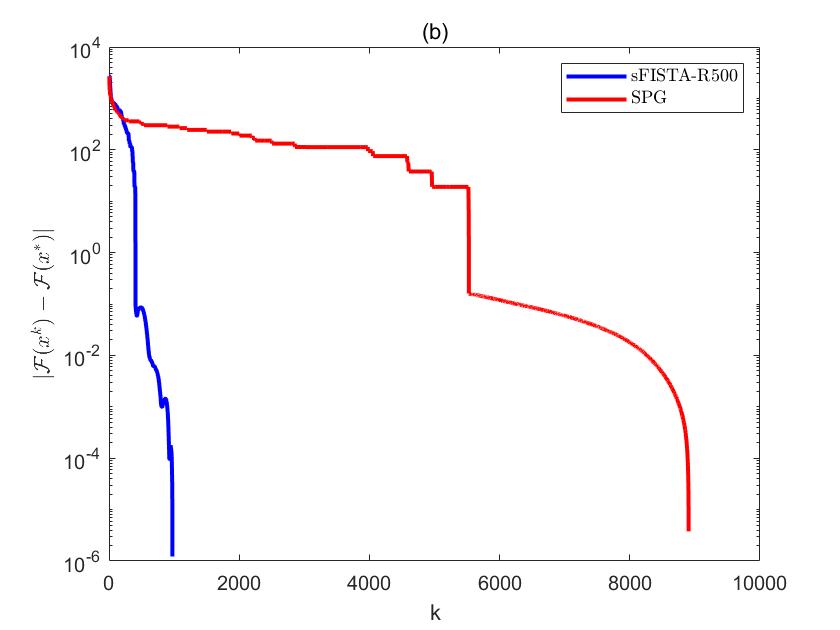}
         \end{minipage}
  \centering\caption{ $m=80,n=160$ for problem (41)}
  \end{figure}

  \begin{figure}[htpb]
\footnotesize
\centering
\begin{minipage}[t] {0.45 \textwidth}
   \includegraphics[scale=0.25]{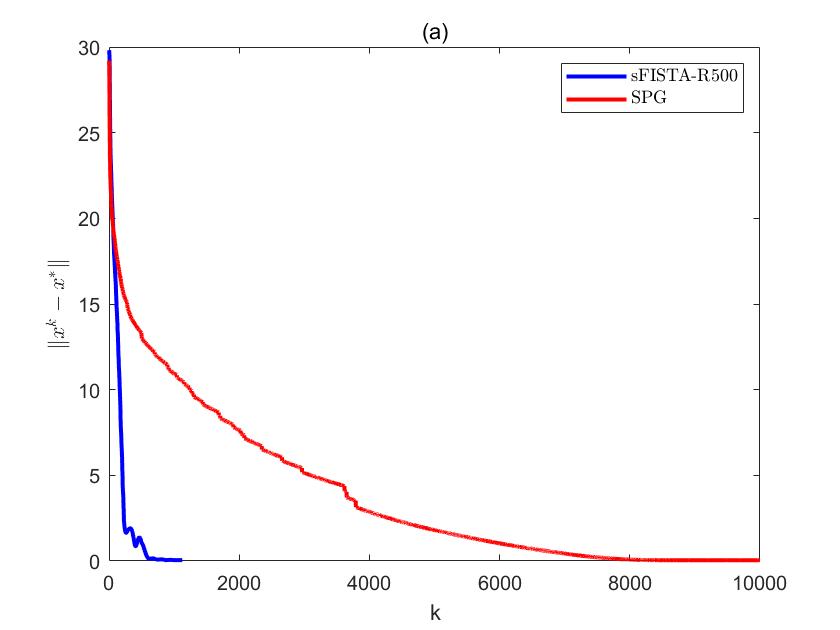}
\end {minipage}
\centering
\begin{minipage}[t] {0.45\textwidth}
	\includegraphics[scale=0.25]{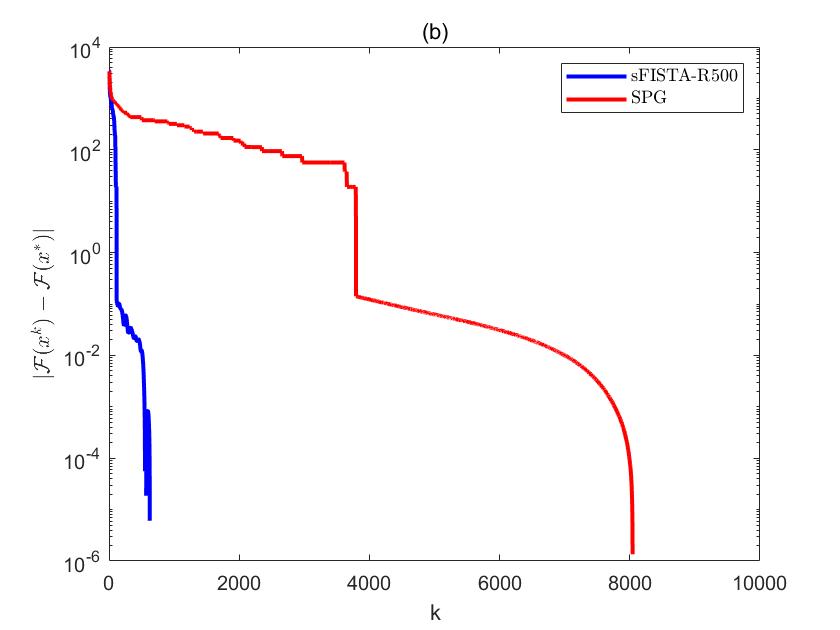}
         \end{minipage}
      \centering\caption{$m=100,n=200$ for problem (41)}
  \end{figure}

First, we present  the effectiveness of  s-FISTA  with fixed restart schemes in  Figure 1, 2, 3 with respect to
$\Vert x^k-x^*\Vert$ and $\vert \mathcal F(x^k)-\mathcal F(x^*)\vert$ against the number of iterations, where $x^*$ is the true solution of the problem (\ref{4.3}). From the part of (a) (b), it is observed that s-FISTA with fixed restart  use fewer iterations as a faster rate than SPG algorithm.
The Figure 4 exhibit the convergence of $\mu_k$  and present that the output of SPGE algorithm
 has the lower bound property and is
very close to original signal $x^*$ with higher probability compared with SPG algorithm.

 \begin{figure}[htpb]
\footnotesize
\centering
\begin{minipage}[t]{0.45\textwidth}
   \includegraphics[scale=0.25]{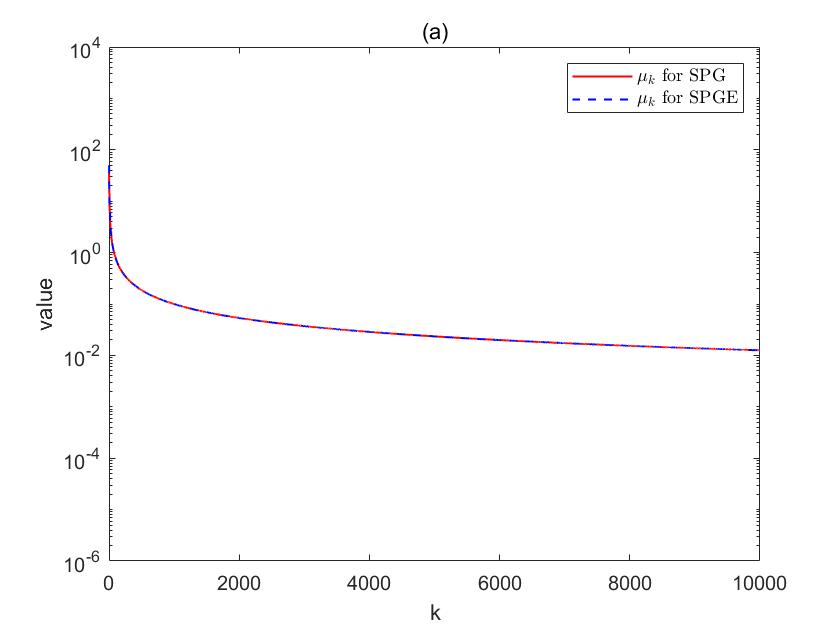}
\end{minipage}
\centering
\begin{minipage}[t]{0.45\textwidth}
	\includegraphics[scale=0.25]{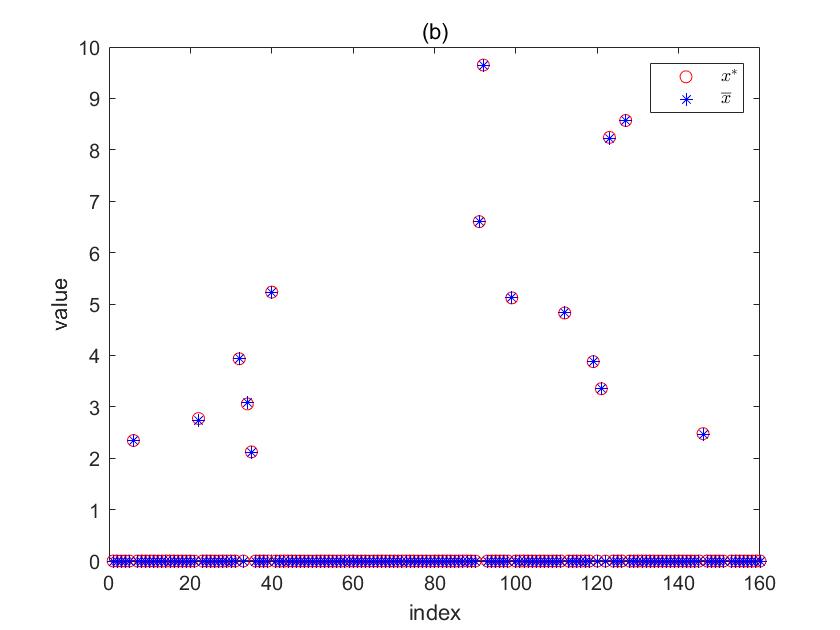}
         \end{minipage}
      \centering\caption{$m=80,n=160$ for problem (41)}
  \end{figure}

Finally, we illustrate the efficiency of the SPGE algorithm from the time, relative error and successful rate aspects. The relative error is defined as in \cite{bianSmoothingProximalGradient2020} and the successful rate is defined as the proportion of the number of elements $\{i:  x^*-\overline x  <= \pm 0.01\textbf{1}_n\}$  in the variable.
  The average results of 20 experiments when $\mathcal A(\overline x)=\mathcal A(x^*)$  are presented in Table 2.
 From Table 2, we view that SPGE algorithm works better than SPG algorithm as a whole and especially we observe that the used time of the SPGE algorithm is far less than the SPG algorithm.

 \begin{table}[htb]
\centering
\caption{Numerical results of the SPGE and SPG algorithm for problem (\ref{4.3})($m\ll n$)}
\begin{tabular}{ccccccccc} 
\toprule
\multirow{2}{*}m &\multirow{2}{*}n & \multirow{2}{*}s & \multicolumn{2}{l}{time}& \multicolumn{2}{l}{rel-err} & \multicolumn{2}{l}{suc-rat} \\
 \cmidrule(lr){4-5}\cmidrule(lr){6-7}\cmidrule(lr){8-9}
 &&  & SPG& SPGE &  SPG& SPGE & SPG& SPGE \\
  \midrule
   60&120&12 &  7.68  &0.96   &0.008   &0.004 &   96$\%$   &95$\%$  \\
  80&160&16 & 8.59    &1.21   &0.006   &0.002 &   96$\%$    &98$\%$    \\
  100 &200&20     &  12.35 &2.19   &0.011   &0.005 &   94.6$\%$  &95.5$\%$ \\
 \bottomrule
  \end{tabular}
\end{table}

 \begin{example}
  Consider censored regression problem, that is,
 \begin{align}\label{4.4}
\min\limits_{\boldsymbol{0}\leq x\leq \boldsymbol{1_n}}{\mathcal{F}_{l_0}(x)}:
=\frac{1}{pm} \sum_{i=1}^{m} \left\{\vert\max \left\{  A_ix-c_i,0 \right\}-b_i   \vert  \right\}+\lambda
{\Vert x\Vert}_{0}.
\end{align}
\end{example}
The goal is to find the true sparse signal $x^*$ about the system $ \max \left\{ Ax^*-c_i,0  \right\} \\ \approx b$ with $A\in R^{m\times n}(m\gg n)$.
The parameters about SPG algorithm are set as default values, one can refer to \cite{bianSmoothingProximalGradient2020} in details and the parameters about SPGE algorithm are set as in the following

$L=1.5$, $\alpha=1$, $ \sigma=0.9$,  $\rm Maxiter=10^4$, $\epsilon=0.01 $ $\kappa=\frac{1}{2}$,
$L_f=\Vert A\Vert_{\infty}$, $\lambda:=\lambda_0L_f$ where $\lambda_0 \in[0.001:0.001:0.1]$.
 The extrapolation term $\beta_k$ in the SPGE algorithm is chosen as  in the way of (\ref{A}) with the fixed restart systems and we perform a fixed restart every 500 iterations.
  At the initialize point, we set $\mu_{-1}=\mu_0=\mu_1=1$ and $t_{-1}=1$.
Let $x^0=0.1*ones(n,1)$, $ v=\min \left\{ \frac{\lambda}{L_f},1 \right\}$.
 We illustrate the efficiency of the SPGE algorithm from the time, relative error, sparsity regression rate and successful rate aspects. The relative error, sparsity regression rate is defined as in \cite{bianSmoothingProximalGradient2020} and the successful rate is defined as in example 4.2.
 The average numerical results of 20 tests are performed to illustrate  the efficiency of SPGE algorithm and
 the details are presented in Table 3. From Table 3, we see that SPGE algorithm  takes much less time than SPG algorithm and  other indexes are better than the SPG algorithm.
\begin{table}[htb]
\centering
\caption{ Numerical results of the SPGE and SPG algorithm for problem (\ref{4.4})($m\gg n$)}
 \resizebox{\textwidth}{!}{
\begin{tabular}{ccccccccccccc} 
\toprule
\multirow{2}{*}m &\multirow{2}{*}n & \multirow{2}{*}s & \multicolumn{2}{l}{time}& \multicolumn{2}{l}{rel-err} & \multicolumn{2}{l}{spa-rat}& \multicolumn{2}{l}{suc-rat}&\multicolumn{2}{l}
{$\mathcal{A}(\overline x)$} \\
 \cmidrule(lr){4-5}\cmidrule(lr){6-7}\cmidrule(lr){8-9}\cmidrule(lr){10-11}\cmidrule(lr){12-13}
 &&  & SPG& SPGE &  SPG& SPGE & SPG& SPGE  & SPG& SPGE& SPG& SPGE \\
  \midrule
      500&100  &20  & 5.42  &1.16  &6.48e-03 &2.43e-03   & 98$\%$   &99$\%$  &97$\%$&99$\%$
         &20.7&19.7\\
     1000&200 & 40 & 30.35 &5.89  &5.48e-03 &2.59e-03   & 98.5$\%$ & 99$\%$  & 90$\%$ &93$\%$&39.97 &39.99\\
    2000&400 & 80 & 236.24 &40.32   &8.00e-03 &5.86e-03  &  98$\%$  &99$\%$ &87$\%$ &90$\%$ & 79.65&79.80 \\
 \bottomrule
  \end{tabular}}
\end{table}
\section{Conclusion }\label{sec4}
This paper mainly propose  the SPGE algorithm  for the exact continuous model of $l_0$ regularization problem \cite{bianSmoothingProximalGradient2020} and discuss its convergence for a fairly general choice of extrapolation parameters, in particular,  the parameters can be chosen as in s-FISTA proposed by Bian \cite{WBA} with fixed restart when $\beta_k\in[0,\sqrt{(1-a\frac{\mu_{k+1}}{\mu_k})\frac{\mu_{k+1}}{\mu_k}}]$ with $a\in(0,1)$ sufficiently small but not equal 0. Especially, it holds that $\sum_{k=0}^{+\infty} {\Vert x^{k+1}-x^k\Vert} <+\infty $ and the whole sequence  $\{x^k\}$ convergences to a lifted stationary point of (\ref{1.2}).
 Further, the rough convergence rate of $o(\frac{1}{k^{1-\sigma}})(\sigma\in(0,1))$  with respect to squared proximal residual is established.
From the numerical experiments, we see that SPGE algorithm is more efficient than SPG algorithm. In particular, in first example, when $v=1$, the SPGE algorithm  can find a global minimizers on different cases. But for SPG algorithm, only $ v\ge 0.5$, it can find the global minimizer. Besides, the iterations and the used time of the SPGE algorithm are less than the SPG algorithm in these examples.

\bibliographystyle {plain}

%

%
%


%

\end{document}